\def\blue{\color{blue}}
\newcommand{\bb}{\mathbb}
\newcommand{\ca}{\mathrm{Cap}}
\renewcommand{\Re}{\mathrm{Re}}
\renewcommand{\Im}{\mathrm{Im}}
\newtheorem{theorem}{Theorem}
\newtheorem{lemma}[theorem]{Lemma}
\newtheorem{corollary}[theorem]{Corollary}
\newtheorem{proposition}[theorem]{Proposition}
\theoremstyle{definition}
\newtheorem{example}[theorem]{Example}
\theoremstyle{remark}
\newtheorem*{remark}{Remark}
\numberwithin{equation}{section}
\numberwithin{theorem}{section}
\begin{document}
\title{Widom factors in $\bb C^n$}
\author{Gökalp Alpan}
\address{Faculty of Engineering and Natural Sciences, Sabanc{\i} University, \.{I}stanbul, Turkey}
\email{gokalp.alpan@sabanciuniv.edu}

\author{Turgay Bayraktar} 
\address{Faculty of Engineering and Natural Sciences, Sabanc{\i} University, \.{I}stanbul, Turkey}
\email{tbayraktar@sabanciuniv.edu}

\author{Norm Levenberg}
\address{Mathematics Department, Indiana University, Bloomington, IN 47405 USA}
\email{nlevenbe@indiana.edu}

\maketitle
\begin{abstract} We generalize the theory of Widom factors to the $\bb C^n$ setting. We define Widom factors of compact subsets $K\subset \bb C^n$ associated with multivariate orthogonal polynomials and weighted Chebyshev polynomials. We show that on product subsets $K=K_1\times\cdots\times K_n$ of $\bb C^n$, where each $K_j$ is a non-polar compact subset of $\bb C$, these quantities have universal lower bounds which directly extend one dimensional results. Under the additional assumption that each $K_j$ is a subset of the real line, we provide improved lower bounds for Widom factors for some weight functions $w$; in particular, for the case $w\equiv 1$. Finally, we define the Mahler measure of a multivariate polynomial relative to $K\subset \bb C^n$ and obtain lower bounds for this quantity on product sets.
\end{abstract}
\section{Notation}
We study extremal polynomials in $\bb C^n$ for $n>1$ with an eye towards generalizing the theory of Widom factors to this setting. 

First, we discuss some results concerning extremal polynomials in $\mathbb C$.
Let $K$ be a non-polar compact subset of $\mathbb C$. We denote the logarithmic capacity of $K$ by $\ca(K)$ and denote the equilibrium measure of $K$ by $\mu_K$. Let $w:K\rightarrow [0,\infty)$ be a Borel measurable function such that it is positive on a subset of positive $\mu_K$-measure and $w\in L^1(K, d\mu_K)$. We consider two types of extremal polynomials. 

In the $L^2$ case, let $d\mu:=w\,d\mu_K$. Then, for each $i\in \mathbb N$, there is a unique monic polynomial $P_i$ of degree $i$ such that 
\begin{equation}
\|P_i\|_{L^2(\mu)}=\inf \{\|Q_i\|_{L^2(\mu)}\}
\end{equation}  
where the infimum is taken over all monic polynomials $Q_i$ of degree $i$. This polynomial $P_i$ is the $i$-th monic orthogonal polynomial for $\mu$.

In the sup-norm case, we further assume that $w$ is bounded above and upper semicontinuous. The weighted Chebyshev polynomial $T_{i,w}^{(K)}$ of degree $i$ is the unique monic polynomial minimizing the supremum norm $\|wQ_i\|_K=\sup_{z\in K}|w(z)Q_i(z)|$ among all monic polynomials $Q_i$ of degree $i$.

We define the $i$-th $L^2$ Widom factor for $w$ on $K$ by
\begin{equation}
W_{2,i}(K,w):= \frac{\|P_i\|_{L^2(\mu)}}{\ca(K)^i}
\end{equation} 
and 
define the $i$-th Widom factor for the sup-norm for $w$ on $K$ by
\begin{equation}
W_{\infty,i}(K,w):= \frac{\left\|T_{i,w}^{(K)}w\right\|_{K}}{\ca(K)^i}.
\end{equation}
The case when the weight function $w\equiv 1$ is of particular interest (in this case we replace $w$ with 1).

If the Erdős–Turán condition
\begin{equation}\label{w muk}
w>0\,\, \mu_K-\mbox{a.e.}
\end{equation}
holds, then (see \cite[Sec. 3]{CSZ5})

\begin{equation}
\lim_{i\rightarrow\infty}[W_{\infty,i}(K,w)]^{1/i}=1.
\end{equation}
Similarly, in the $L^2$ case, if \eqref{w muk} is satisfied then
\begin{equation}
\lim_{i\rightarrow\infty}[W_{2,i}(K,w)]^{1/i}=1
\end{equation}
in view of \cite[Theorem 4.1.1]{ST92}. Thus $d\mu=w\,d\mu_K$ is {\it regular} in the sense of Stahl-Totik if \eqref{w muk} is satisfied, i.e., the $i$-th roots of these monic orthogonal polynomials for $\mu$ exhibit regular asymptotic behavior. There is an analogous notion of regularity of measures in $\mathbb C^n$ ($n>1$), see \cite{Bloom}.

We focus on lower bounds concerning Widom factors. Let 
\begin{equation}\label{oneseven}
S(K,w):=\exp\left[\int \log{w}\,d\mu_K\right].
\end{equation}
We say that $w$ satisfies the Szeg\H{o} condition on $K$ if $S(K,w)>0$. Note that if $w$ satifies the Szeg\H{o} condition then \eqref{w muk} also holds; hence $d\mu=w\,d\mu_K$ is regular in the sense of Stahl-Totik. 
Another trivial observation is that $S(K,1)=1$. 

We have the following universal lower bounds (see \cite[Theorem 1.2]{Alp19} and \cite[Theorem 13]{NSZ21})
\begin{align} 
[W_{2,i}(K,w)]^2&\geq S(K,w),\label{wid1}\\
W_{\infty,i}(K,w)&\geq S(K,w).\label{wid2}
\end{align}
Both \eqref{wid1} and \eqref{wid2} are optimal in the sense that for arbitrary compact $K\subset \mathbb R$, and each $i\in \mathbb{N}$, we can find weight functions $w$ such that 
$$[W_{2,i}(K,w)]^2/ S(K,w) \ \hbox{and} \ W_{\infty,i}(K,w)/S(K,w)$$ are arbitarily close to $1$, see \cite[Theorem 2.2]{AlpZin21} and \cite[Theorem 13]{NSZ21}. 

Despite this, there are special scenarios where the lower bounds can be improved. One line of results concerns weights or sets for which a \emph{doubling} of the $S(K,w)$ bound holds when $K$ is a subset of the real line. That is, in certain cases one has 
\begin{equation}\label{doubled sup}
W_{\infty,i}(K,w)\ge 2\,S(K,w)
\end{equation}
or
\begin{equation}\label{doubled l2}
[W_{2,i}(K,w)]^2 \ge 2\,S(K,w),
\end{equation}
for all $i\in\mathbb{N}$ or where these inequalities hold for $i$ sufficiently large. An important particular case for which \eqref{doubled sup} and \eqref{doubled l2} hold is when $w\equiv 1$ on an arbitrary non-polar compact set $K\subset \mathbb{R}$, see \cite{Sch08} and \cite{AlpZin20}. We refer the reader to \cite{Alp22}, \cite{AlpZin21}, \cite{AZ24}, \cite{CR24}, \cite{SchZin} for further results concerning doubled lower bounds.

In addition to these non-asymptotic results on Widom factors, there are many results involving their asymptotic behavior.  

When $K$ is the unit circle $\partial \mathbb D$, the Szegő theorem states that
\[
\lim_{i\to\infty} \left[ W_{2,i}(K,w) \right]^2 = S(K,w)
\]
for weight functions $w$ on $\partial \mathbb D$ (see, e.g. \cite[Theorem 2.3.1]{Sim05}). 

For the case $K=[-1,1]$, the Szegő theorem takes the form
\begin{align*}
	\lim_{i\to\infty}\left[ W_{2,i}(K, w) \right]^2 = 2 S(K, w)
\end{align*}
for weight functions \( w \) on \( [-1, 1] \) (see, for example, \cite[Sections 13.3 and 13.8]{Sim05}). 

For a regular compact set $K\subset \mathbb R$, let $g_K$ denote the Green function of the unbounded component of the complement of $K$ and let $\{c_j\}_j$ be the set of critical points of $g_K$. Then $K$ is called a Parreau-Widom set if $\sum_j g_K(c_j)<\infty$. The class of Parreau-Widom sets includes finite gap sets and certain Cantor-type sets of positive Lebesgue measure. For such a set $K$ and a weight function $w$ on $K$, we have the following results (see \cite[eq. (1.6)]{Chr12} and \cite[Theorem 1.1]{CSZ6}):
\begin{align}
\inf_i  W_{2,i}(K,w)>0 & \implies S(K,w)>0,\\
\inf_i  W_{\infty,i}(K,w)>0 & \implies S(K,w)>0.
\end{align}

As for asymptotic lower bounds, there are several general results. If $K\subset \mathbb{R}$ is a non-polar compact set and $w$ is a weight function on $K$ then (see \cite[Theorem 4.2]{AZ24})
\begin{equation}
\liminf_{i\rightarrow\infty} [W_{2,i}(K,w)]^2\geq 2S(K,w),
\end{equation}
despite the optimality of \eqref{wid1}. Under fairly mild conditions, (see \cite[Section 3]{AZ24}, \cite[eq. (1.11)]{CSZ6})
\begin{equation}
\liminf_{i\rightarrow\infty} W_{\infty,i}(K,w)\geq 2S(K,w)
\end{equation}
holds.

When we define Widom factors (Sec. 2) and the Szeg\H{o} condition (Sec. 3) in $\mathbb{C}^n$, we make our choices in such a way that
\begin{enumerate}[$(i)$]
\item The Szeg\H{o} condition implies the regularity of the associated measure in the $L^2$ case.
\item The generalized version of $S(K,w)$ appears in the expressions concerning lower bounds of the Widom factors.
\end{enumerate}

Let $K$ be a compact set in $\mathbb{C}^n$. In order to define useful notions of Widom factors associated to $K$ when $n>1$, several complications arise. In $\mathbb{C}$, the connection with logarithmic potential theory as described, e.g., in \cite{Ran95}, is important; indeed, for a holomorphic polynomial $p(z)$ the function $u(z):=\log |p(z)|$ is subharmonic and the role of the linear Laplacian becomes clear. When $n>1$, for a holomorphic polynomial $p(z)=p(z_1,...,z_n)$, the function $u(z):=\log |p(z)|$ is now {\it plurisubharmonic}: it is uppersemicontinuous and subharmonic on complex lines. We first of all require $K$ to be a {\it non-pluripolar} compact subset of $\mathbb{C}^n$: this means that if $u$ is a plurisubharmonic function on an open, connected neighborhood of $K$ with $u|_K=-\infty$, then $u\equiv -\infty$. In particular, $K$ cannot lie in an algebraic (or even complex-analytic) subvariety of $\bb C^n$. Moreover, the natural differential operator utilized in pluripotential theory -- the study of plurisubharmonic functions -- is the highly nonlinear {\it complex Monge-Amp\`ere operator}. There is a replacement for the univariate equilibrium measure -- the normalized Monge-Amp\`ere measure $d\nu_K$ defined in (\ref{mameas}) -- and this will be utilized throughout.

Yet another complication arises as there are a multitude of possible definitions of normalized polynomial classes and hence Chebyshev and orthogonal polynomials. In particular, there are a variety of  capacitary notions, and, unlike the univariate case, the various capacities arising in pluripotential theory do not coincide on compact sets, although the reasonable ones do have the property of identifying pluripolar sets as having capacity zero. Moreover, in certain cases, Chebyshev polynomials associated to compact sets need not be unique; a simple example is given in \cite[Remark 3.4]{BBCL}.

 We briefly describe some normalized polynomial classes that will arise in definitions of Chebyshev constants and orthogonal polynomials later on. Let $e_0(z),\ldots, e_j(z),\ldots$ be a lexicographic ordering of the monomials $\{e_i(z)=z^{\alpha(i)}= z_1^{\alpha_1}\cdots z_n^{\alpha_n} \}$ in $\bb C^n$ where $\alpha(i)\in {\bb{N}_0}^{n}$, $\bb N_0=\bb N\cup \{0\}$ so that
\begin{enumerate}
	\item $\deg e_i =|\alpha(i)|=\sum_{j=1}^n \alpha_j$ is non-decreasing; and
	\item if $\alpha=(\alpha_1, \ldots, \alpha_n)\in {\bb{N}_0}^{n}$,  $\beta=(\beta_1, \ldots, \beta_n) \in {\bb{N}_0}^{n}$, and $|\alpha|=|\beta|$, we say  $\beta \prec \alpha$ if for some $l\in\{1,\ldots, n\}$, 
	\begin{equation}
		\alpha_l<\beta_l, \mbox{ and }\,\, \alpha_k=\beta_k,\,\,\, k=1,\ldots, l-1.
	\end{equation}
\end{enumerate}
Define 
\begin{equation}
	\mathcal P_i= \mathcal P(\alpha(i)):=\left\{e_i(z)+\sum_{j<i} c_j e_j(z): c_j\in \bb C \right\}.
\end{equation}
These are the ``monic'' elements in the finite dimensional vector space of polynomials  
\begin{equation}
	\mathbb P_i= \mathbb P(\alpha_i):=\left\{\sum_{j\leq i} c_j e_j(z): c_j\in \bb C \right\}.
\end{equation}
We will consider several different extremal problems related to these classes.

In the remaining subsections of this introductory section, we define (weighted and unweighted) Chebyshev polynomials and orthogonal polynomials associate to the $\mathcal P_i$ classes, and we introduce and relate various capacitary notions. Motivated by (\ref{ti}) in Theorem \ref{tau} in Section 2, we define our Widom factors in $\bb C^n$ in (\ref{wi1}) and (\ref{wi2}). In Section 3, we prove a general lower bound (\ref{jen}) in Theorem \ref{threeone} on $L^2-$norms of polynomials in terms of an exponential relative entropy function (\ref{relent}) analogous to the univariate case and use this to provide good estimates in the case of the unit Euclidean ball and unit polydisk, two prototypical compact sets in $\bb C^n$. Section 4 studies Chebyshev and orthogonal polynomials on product sets; in particular, \eqref{unif l2}, \eqref{hat wid}, Theorems 
\ref{fourthree} and \ref{foursix} generalize univariate results. Finally, in Section 5, we define the Mahler measure of a polynomial $P$ relative to $K$ as $ M(P):= \exp \left[\int \log{|P|} \, d\nu_K\right]$. We generalize a univariate result of Mahler (which estimates coefficients of polynomials in terms of $M(K)$ and the logarithmic capacity of $K$ for $K\subset \bb C$) to the multivariate setting for product sets. The Mahler measure has number-theoretic applications (cf., \cite{igor}), and we give a simple application of our results to polynomials with integer coefficients in Theorem \ref{fivethree}.

We hope that this work can provide a basis for further study of higher dimensional Widom factors.

\subsection{Weighted Chebyshev polynomials} Let $w:K\rightarrow \bb R^+$ be a nonnegative Borel measurable function such that $\sup_{z\in K} w(z)<\infty$ and the set $\{z: w(z)> 0\}$ is non-pluripolar. Let

\begin{equation}
	\widehat{w}(z):= \lim_{r\rightarrow 0^+} \sup_{\xi\in B_r(z)\cap K}w_j(\xi),\,\, z\in K,
\end{equation}
where $B_r(z)$ is the open ball of radius $r$ centered at $z$.
Hence $\widehat{w}$ is upper semicontinuous, bounded, non-negative on $K$ with 
\begin{equation}
	w(z)\leq \widehat{w}(z),\,\,\, z\in K.
\end{equation}
Let $\|\cdot\|_K$ denote the sup-norm on $K$. For any continuous $f: K\rightarrow \bb C$, we have 
\begin{equation}\label{upper eqq}
	\|f w\|_{K}= \|f \widehat{w}\|_{K}.
\end{equation}
This follows from a simple generalization of \cite[Lemma 1]{NSZ21}. For such $f$, let $\|f\|_{\widehat{w},K}:= \|\widehat{w}f\|_K$. Since $\{z: \widehat{w}(z)> 0\}$ is non-pluripolar, for $f \in \mathbb P_i$, the equation $\|f\|_{\widehat{w},K}=0$ holds if and only if $f$ is the zero polynomial as the zero set of a non-zero polynomial is pluripolar. Thus, it is easy to verify that $\|\cdot\|_{\widehat{w},K}$ is a norm on $\mathbb P_i$. Since $\mathbb P_i$ is a complex finite dimensional normed vector space, it is complete so there must be at least one solution for the following minimization problem (for $i\geq 1$):
\begin{equation}\label{cheb min}
	\inf\{\|P\|_{\widehat{w},K}: P\in \mathcal P_i\}= \inf\{\|e_i-Q\|_{\widehat{w},K}: Q\in \mathbb P_{i-1}\}.
\end{equation}
A minimizing polynomial in $\mathcal P_i$ is called an {\it $i$-th weighted Chebyshev polynomial on $K$ with respect to the weight $w$}.
Although the solution to this problem might not be unique, by abusing notation we denote a weighted Chebyshev polynomial with leading term $z^{\alpha(i)}$ for the weight $w$ on $K$ by $T_{\alpha(i),w}^{(K)}$ and we let $t_{\alpha(i),w}^{(K)}:=\|T_{\alpha(i),w}^{(K)}\|_{w,K}$. Then, in view of \eqref{upper eqq}, we get
\begin{equation}\label{one8}
	\inf\{\|P w\|_{K}: P\in \mathcal P_i\}= 	\inf\{\|P\|_{\widehat{w},K}: P\in \mathcal P_i\}.
\end{equation}
In particular, to study such minimization problems, it is enough to consider upper semicontinuous weight functions.

\subsection{Orthogonal polynomials.} For $K\subset \bb C^n$ compact and non-pluripolar, let $\mu$ be a positive finite Borel measure on $K$ such that $\mathrm{supp}(\mu)$ is non-pluripolar. Then the monomials $e_0, e_2, \ldots, e_j, \ldots$ are linearly independent in $L^2(\mu)$, see \cite[Proposition 3.5.]{Bloom}. Applying Gram-Schmidt on $\{e_i\}_{i=0}^\infty$, we obtain monic orthogonal polynomials $V_{\alpha(i)}\in \mathcal P_i$ for each $i$ where $V_{\alpha(0)}\equiv 1$. Thus
\begin{equation}
	\|V_{\alpha(i)}\|_{L^2(\mu)}= \inf \left\{\|P\|_{L^2(\mu)}: P\in \mathcal P_i \right\}.
\end{equation}
\subsection{Root asymptotics for unweighted Chebyshev polynomials.} We mention some results from \cite{Zak1}. Let 
\begin{equation}\label{Sig}
	\Sigma=\left\{\theta: (\theta_s)\in \mathbb{R}^n, |\theta|= \sum_{s=1}^n\theta_s=1,\ \theta_s\geq 0 \right\},
\end{equation}
and $\Sigma_0=\left\{\theta \in \Sigma: \theta_s > 0, \ s=1,...,n \right\}$. For $i\geq 1$, let

\begin{equation}
\tau_i = {\left[t_{\alpha(i),1}^{(K)}\right]}^{1/{|\alpha(i)|}}
\end{equation}
and
\begin{align}
\tau^+ (K)= \limsup_{j\rightarrow\infty}\tau_j,\,\, \tau^- (K)= \liminf_{j\rightarrow\infty}\tau_j.
\end{align}
For any $\theta \in \Sigma$, let
\begin{equation}
	\tau(K, \theta ):= \limsup_{\substack{i\rightarrow \infty \\\frac{\alpha(i)}{|\alpha(i)|}  \rightarrow \theta}} \tau_i,
\end{equation}
and
\begin{equation}
	\tau_{-}(K, \theta ):= \liminf_{\substack{i\rightarrow \infty \\\frac{\alpha(i)}{|\alpha(i)|}  \rightarrow \theta}} \tau_i.
\end{equation}
Here, for $\theta\in \Sigma_0$, from \cite[Lemma 1]{Zak1} we have 
\begin{equation}
	\tau(K, \theta )= \lim_{\substack{i\rightarrow \infty \\\frac{\alpha(i)}{|\alpha(i)|}  \rightarrow \theta}} \tau_i.
\end{equation}
Also, from  \cite[Lemma 3]{Zak1} for any $\theta\in \Sigma\setminus \Sigma_0$ we have
\begin{equation}\label{taulow}
	\tau_{-}(K, \theta )= \liminf_{\substack{\theta^\prime\rightarrow \theta \\ \theta^\prime \in \Sigma_0}} \tau(K,\theta^\prime).
\end{equation}
The {\it directional Chebyshev constants} $\tau(K, \theta )$ were introduced by Zaharjuta in \cite{Zak1} in order to solve a problem of Leja involving 
the notion of transfinite diameter of a compact set $K\subset \mathbb{C}^n$ when $n>1$.

For $z\in \mathbb{C}^n$, let 
\begin{align}\label{norm1}
|z|= \left(\sum_{\nu=1}^n |z_\nu|^2 \right)^{\frac{1}{2}},
\end{align}

\begin{align}\label{norm2}
\|z\|= \max\{|z_\nu|\}, \nu=1,\ldots, n.
\end{align}
Since $K$ is compact, 
$\left\|T_{\alpha(i),1}^{(K)}\right\|_K\leq \|e_i\|_K \leq (\sup_{z\in K} \|z\|)^{|\alpha(i)|}$ holds for each $i$ and thus
\begin{equation}
	\tau_i\leq \sup_{z\in K} \|z\|
\end{equation}
which implies that $\tau^+(K)$ is bounded above.

It follows from \cite[Corollary 4, Corollary 6, property 7(d') in Section 7]{Zak1} that $\tau^-(K)>0$ provided that $K$ is non-pluripolar. 

Finally, \cite[Corollary 3]{Zak1} says that 
\begin{equation}
	\tau^+(K)= \sup_{\theta\in \Sigma} \tau(K, \theta)
\end{equation}
and 
\begin{equation}\label{tau-}
	\tau^-(K)= \inf_{\theta\in \Sigma} \tau(K, \theta)=\inf_{\theta\in \Sigma_0} \tau(K, \theta).
\end{equation}

We mention that weighted versions of these notions have been studied before, cf., \cite{BL} but the setup for the latter and for our case differ since \cite{BL} considers minimizers for the problem $\|P w^{|\alpha(i)|}\|_K$, $P\in \mathcal P_i$ whereas in (\ref{one8}) we do not consider varying powers of $w$ depending on the degree. 

\subsection{Relations between capacitary notions}

Let $L_s= \inf\{\|P\|_K\}$ where the infimum is taken over the class of polynomials 
$$P(z)=\sum_{|k|\leq s} a_k z^k, \ \ \  \sum_{|k|=s} |a_k|\geq 1.$$

Then $T(K) = \lim_{s\rightarrow\infty} |L_s|^{1/s}$ exists, see p. 359 in \cite{Zak1}. It follows from definition of $L_s$ that $T(K)\leq \tau_{-}(K,\theta)$ for all $\theta$ hence
\begin{equation}\label{ttau}
	T(K)\leq \tau^{-}(K).
\end{equation}

For $K \subset \bb C^n$ compact, we let $V_K^*(z):=\limsup_{\zeta \to z}V_K(\zeta)$ where 
$$V_K(z):=\sup\{u(z):u\in L(\bb C^d) \ \hbox{and} \  u\leq 0 \ \hbox{on} \ K\}$$
$$= \sup \{{1\over {\rm deg} (p)}\log |p(z)|:p\in \bigcup {\mathcal P}_n, \  ||p||_{K}\leq 1 \}.$$
Here $u\in L(\bb C^n)$ if $u$ is plurisubharmonic in $\bb C^n$ (we write $u\in PSH(\bb C^n)$) and $u(z)\leq \log |z| +\mathcal{O}(1)$ as $|z|\to \infty$. It is known that $K$ is pluripolar if and only if $V_K^*\equiv +\infty$; otherwise $V_K^*\in L(\bb C^n)$. For non-pluripolar $K$, 
we denote by
\begin{equation} \label{mameas} d\nu_K:={\left(\frac{1}{2\pi}\right)}^n(dd^c V_K^*)^n \end{equation}
the normalized {\it Monge--Amp\`ere measure} of the extremal function $V_K^*$. Here, $d=\partial +\bar \partial$ and $d^c =i(\bar \partial - \partial)$ as in \cite{K}, p. 14. Moreover, the nonlinear {\it complex Monge--Amp\`ere operator} $(dd^c \cdot)^n$ is well defined on locally bounded plurisubharmonic functions as a positive measure. Dividing $(dd^c V_K^*)^n$ by $(2\pi)^n$ makes  $d\nu_K$ a probability measure. A compact set $K\subset \bb C^n$ is {\it regular at $a\in K$} if $V_K$ is continuous at $a$; $K$ is {\it regular} if it is continuous on $K$, i.e., if $V_K=V_K^*$. Let
$$c(K)= \exp{\left[-\limsup_{z\rightarrow\infty}(V_K^*(z)- \log \|z\|)\right] }$$
and
$$C(K)= \exp{\left[-\limsup_{z\rightarrow\infty}(V_K^*(z)- \log |z|)\right] }.$$
Clearly, we have $c(K) \leq C(K)$. It was shown in \cite[Theorem 2]{Zak1} that $c(K)\leq T(K)$. 

It was claimed in \cite[Thereom 25.2]{Sad} that $C(K)\leq T(K)$. The proof given is a repetition of the proof of \cite[Theorem 2]{Zak1}. According to Zakharyuta (p. 300 in \cite{Zak2}) the true inequality is $c(K)\leq T(K)$. Indeed, in Example \ref{oneone} below we give an example where $T(K)<C(K)$. Sadullaev proves the converse inequality $T(K)\leq C(K)$ in \cite[Proposition 25.3]{Sad} for regular sets but this can be easily extended to general compact sets by an approximation argument  as for any compact $K$ and $\epsilon >0$, the set $K_{\epsilon}:=\{z: \hbox{dist}(z,K)\leq \epsilon\}$ is regular; cf., \cite{K}, Corollary 5.1.5. Thus, we have 
\begin{equation}
	c(K)\leq T(K)\leq C(K)
\end{equation}
for arbitrary compact sets as noted by Zakharyuta in p. 300 of  \cite{Zak2}. In addition, Sadullaev's proof of $T(K)\leq C(K)$ is also applicable for $\tau^{-}(K)$ instead of $T(K)$ so that we also have $\tau^{-}(K)\leq C(K)$. Thus

\begin{equation}\label{various cap}
	c(K)\leq T(K)\leq \tau^{-}(K) \leq C(K).
\end{equation}

Let $K=B_2=\{(x_1,x_2)\in \bb R^2: x_1^2 +x_2^2\leq 1\}$ be the closed real Euclidean unit ball in $\bb R^2$ embedded into  $\bb C^2$. We discuss several properties of this set below.
\begin{example} \label{oneone} Orthogonal and Chebyshev polynomials on $K=B_2$ were studied in \cite{BBL} and we derive some consequences using that paper. By \cite[Lemma 2.4]{BBL}, for $\theta=(\theta_1,\theta_2)\in \Sigma_0$, we have

\begin{equation}\label{the b2}
[\tau(K, \theta)]^2=[\tau(K, (\theta_1,1-\theta_1))]^2=\frac{{\theta_1}^{\theta_1} {(2- {\theta_1})}^{2- {\theta_1}}}{4^{2-\theta_1}}
\end{equation}
If we take the limit in \eqref{the b2} as $\theta_1\rightarrow 0$ or as $\theta_1\rightarrow 1$, and use \eqref{taulow}, we see that
\begin{equation}\label{b2 boundary}
	\tau_{-}(K,(1,0))= \tau_{-}(K,(0,1))=\frac{1}{2}.
\end{equation}
By taking the logarithm of the right hand side  of \eqref{the b2} and then differentiating, we notice that the only critical point of that function for $\theta_1\in (0,1)$ occurs at $\theta_1=\frac{2}{5}$. Upon substitution, we see that
\begin{equation}\label{b2 min}
\tau(K, (2/5, 3/5))=\frac{2}{5}.
\end{equation}\label{b2 tau minus}
Combining \eqref{b2 boundary} and \eqref{b2 min}, we get
\begin{equation}
\tau^{-}(K)=\frac{2}{5}.
\end{equation}
Next, we compute $c(K)$ and $C(K)$. 
By Lundin's formula \cite{lundin} , 
\begin{align}
V_K^{*}(z)&=V_K^{*}(z_1,z_2)\\
&= \frac{1}{2}\log[|z_1|^2+|z_2|^2+ |z_1^2+z_2^2-1|+\sqrt{(|z_1|^2+|z_2|^2+ |z_1^2+z_2^2-1|)^2-1}
\end{align}
for $z=(z_1,z_2)\in \bb C^2\setminus K$.
Since 
\begin{equation}
|z_1^2+z_2^2-1|\leq |z_1|^2+|z_2|^2+1= |z|^2+1,
\end{equation}
it follows that
\begin{equation}\label{green upper}
V_K^{*}(z)\leq \frac{1}{2}\log{(4|z|^2+2)}
\end{equation}
Thus,
\begin{equation}\label{along some path}
\limsup_{z\rightarrow\infty}(V_K^*(z)- \log |z|)\leq \frac{1}{2}\log{4}= \log{2}
\end{equation}
Now if $\Re{z_1}\rightarrow +\infty$, $\Im{z_1}=z_2=0$, then along that path $V_K^*(z)- \log |z|\to \log{2}$ which combined with \eqref{along some path} imply that
\begin{equation}\label{general path}
\limsup_{z\rightarrow\infty}(V_K^*(z)- \log |z|)= \log{2}.
\end{equation}
Thus
\begin{equation} 
C(K)=e^{-\limsup_{z\rightarrow\infty}(V_K^*(z)- \log |z|)}=\frac{1}{2}
\end{equation}
and hence $T(K)\leq \tau^{-}(K)<C(K).$

Note that in $\bb C^2$, the norms (\ref{norm1}) and (\ref{norm2}) satisfy
\begin{equation}
|z|^2\leq 2||z||^2.
\end{equation}
Thus, in view of \eqref{green upper}, we get
\begin{equation}
\limsup_{z\rightarrow\infty}(V_K^*(z)- \log \|z\|)\leq \frac{1}{2}\log{8}= \log{2\sqrt{2}}.
\end{equation}
Along the path $\Re{z_1}=\Re{z_2}$ and $\Im{z_1}=\Im{z_2}=0$, $V_K^*(z)- \log \|z\| \to \log{2\sqrt{2}}$. Therefore,
\begin{equation}
c(K)= \exp{\left[-\limsup_{z\rightarrow\infty}(V_K^*(z)- \log \|z\|)\right] }=\frac{1}{2\sqrt 2}.
\end{equation}
Thus $c(K)<\tau^{-}(K)<C(K).$

\end{example}
\textbf{Acknowledgments}: Research of Gökalp Alpan was supported by the Scientific and Technological Research Council of Türkiye (TÜBİTAK) ARDEB 1001 Grant Number 123F358. Research of Gökalp Alpan was also supported by BAGEP Award of the Science Academy. 

T. Bayraktar is partially supported by TÜBİTAK-ARDEB grant no 123F358.

Norm Levenberg is supported by Simons Foundation grant No. 707450.

\section{Widom factors in $\bb C^n$} 
Our objective is to generalize the theory of Widom factors to the $\bb C^n$ setting. Let $\ca(\cdot)$ denote the logarithmic capacity in $\bb C$ as in the introduction. We first obtain a simple result involving $\tau^{-}(K)$ for $K\subset \bb C^n$ instead of $\ca(K)$ for $K\subset \bb C$. This can be considered a generalization of \cite[Theorem 5.5.4(a)]{Ran95}. We have the following lower bound for Chebyshev polynomials:

\begin{theorem}\label{tau}
	Let $K$ be a non-pluripolar compact subset of $\mathbb{C}^n$. Then for any $i$, we have
	\begin{equation}\label{ti}
		\left\|T_{\alpha(i),1}^{(K)}\right\|_K \geq  [\tau^{-}(K)]^{|\alpha(i)|}.
	\end{equation}
\end{theorem}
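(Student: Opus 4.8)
The plan is to compare $\|T_{\alpha(i),1}^{(K)}\|_K$ with the extremal quantities $\tau_j$ by taking high powers of the Chebyshev polynomial and invoking the definition of $\tau^-(K)$ through $T(K)$ (via \eqref{ttau}). Fix $i$ and set $P:=T_{\alpha(i),1}^{(K)}\in\mathcal P_i$, so that $\|P\|_K=t_{\alpha(i),1}^{(K)}$. For each $m\in\bb N$ the polynomial $P^m$ is monic of degree $m|\alpha(i)|$ with top-degree behavior governed by the monomial $z^{m\alpha(i)}$; more precisely $P^m$ lies in $\mathbb P_{j_m}$ for the appropriate index $j_m$ and its homogeneous part of degree $m|\alpha(i)|$ is $z^{m\alpha(i)}$, which has a coefficient of modulus $1$. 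Hence $P^m$ is an admissible competitor in the extremal problem defining $L_{m|\alpha(i)|}$, giving
\begin{equation}
L_{m|\alpha(i)|}\le \|P^m\|_K=\|P\|_K^{\,m}=\bigl(t_{\alpha(i),1}^{(K)}\bigr)^m.
\end{equation}

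Next I would take $m$-th roots and let $m\to\infty$. By the existence of the limit $T(K)=\lim_{s\to\infty}|L_s|^{1/s}$ (p.\ 359 in \cite{Zak1}), passing to the subsequence $s=m|\alpha(i)|$ yields
\begin{equation}
T(K)=\lim_{m\to\infty}L_{m|\alpha(i)|}^{1/(m|\alpha(i)|)}\le \bigl(t_{\alpha(i),1}^{(K)}\bigr)^{1/|\alpha(i)|}.
\end{equation}
Combining this with \eqref{ttau}, namely $\tau^-(K)\le T(K)$ — wait: \eqref{ttau} says $T(K)\le\tau^-(K)$, so this only gives $\tau^-(K)\ge T(K)$, the wrong direction. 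So instead I would argue directly against $\tau_-(K,\theta)$: the inequality $T(K)\le\tau_-(K,\theta)$ for all $\theta$ came precisely from the observation that $L_s\le$ the Chebyshev norm along any directional sequence. Retracing that, for the fixed index $i$ with $\alpha(i)/|\alpha(i)|=:\theta_0$, the powers $P^m$ realize the direction $\theta_0$ and show $L_{m|\alpha(i)|}^{1/(m|\alpha(i)|)}\le \bigl(t_{\alpha(i),1}^{(K)}\bigr)^{1/|\alpha(i)|}$ for every $m$. Now take $m\to\infty$ on the left to get $T(K)\le \bigl(t_{\alpha(i),1}^{(K)}\bigr)^{1/|\alpha(i)|}$, and then raise to the power $|\alpha(i)|$:
\begin{equation}
t_{\alpha(i),1}^{(K)}\ge T(K)^{|\alpha(i)|}.
\end{equation}
This proves a slightly weaker bound with $T(K)$ in place of $\tau^-(K)$. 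To upgrade $T(K)$ to $\tau^-(K)$, I would instead work with $\tau_-(K,\theta_0)$ directly: powering $P$ shows $\tau_j\le \bigl(t_{\alpha(i),1}^{(K)}\bigr)^{1/|\alpha(i)|}$ is \emph{not} immediate because $\tau_j$ involves the Chebyshev polynomial for index $j$, not a power of $P$. Rather, $t_{\alpha(j_m),1}^{(K)}\le\|P^m\|_K=\|P\|_K^m$ since $P^m\in\mathcal P_{j_m}$ is a competitor; taking $m$-th roots, $\tau_{j_m}=\bigl(t_{\alpha(j_m),1}^{(K)}\bigr)^{1/(m|\alpha(i)|)}\le \bigl(t_{\alpha(i),1}^{(K)}\bigr)^{1/|\alpha(i)|}$. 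Since $\alpha(j_m)/|\alpha(j_m)|=\theta_0$ for all $m$, letting $m\to\infty$ gives $\tau_-(K,\theta_0)\le\liminf_m\tau_{j_m}\le \bigl(t_{\alpha(i),1}^{(K)}\bigr)^{1/|\alpha(i)|}$, hence $t_{\alpha(i),1}^{(K)}\ge \tau_-(K,\theta_0)^{|\alpha(i)|}\ge \tau^-(K)^{|\alpha(i)|}$ by \eqref{tau-}, which is exactly \eqref{ti}.

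The main obstacle is the bookkeeping around indices: one must check that $P^m$ genuinely lies in the class $\mathcal P_{j_m}$ for the lexicographic index $j_m$ with $\alpha(j_m)=m\alpha(i)$, i.e.\ that squaring/powering a ``monic'' polynomial in the graded lexicographic sense again produces a ``monic'' polynomial whose leading monomial is $z^{m\alpha(i)}$ and whose lower-order terms all have multi-index $\prec m\alpha(i)$ or strictly smaller total degree. This is where one uses that the lexicographic order is compatible with multiplication of leading terms (a standard monomial-order fact), so that the product of two elements of $\bigcup_k\mathcal P_k$ with leading monomials $z^\alpha,z^\beta$ has leading monomial $z^{\alpha+\beta}$. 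Once this is in place, the rest is the elementary limiting argument above together with the already-established chain \eqref{various cap}.
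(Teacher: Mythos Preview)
Your final argument is correct and is essentially the paper's own proof: power up $P=T_{\alpha(i),1}^{(K)}$, observe that $P^m\in\mathcal P_{j_m}$ with $\alpha(j_m)=m\alpha(i)$ is a competitor so $\tau_{j_m}\le \bigl(t_{\alpha(i),1}^{(K)}\bigr)^{1/|\alpha(i)|}$, and pass to the $\liminf$. The detour through the directional constant $\tau_-(K,\theta_0)$ and \eqref{tau-} is unnecessary: since $\{j_m\}$ is a subsequence of all indices, $\tau^-(K)=\liminf_j\tau_j\le\liminf_m\tau_{j_m}$ directly, which is exactly how the paper phrases it; your initial attempt via $T(K)$ and \eqref{ttau} is indeed the wrong direction, as you noticed.
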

\begin{proof}
For $i=0$, this is trivial.	Fix $i\geq 1$. Define $Q_N= \left[T_{\alpha(i),1}^{(K)}\right]^N$ for each $N\geq 1$. Then $Q_N$ is a polynomial whose leading term is $z^{N\alpha(i)}$. Note that the value of $\|Q_N\|_K^{1/{(N|\alpha(i)|)}}$ is independent of $N$. Thus, it follows from the definition of $\tau^-(K)$ that
\begin{equation}\label{exp}
\|Q_1\|_K^{1/{(|\alpha(i)|)}}=	\lim_{N\rightarrow\infty} \|Q_N\|_K^{1/{(N|\alpha(i)|)}}=	\liminf_{N\rightarrow\infty} \|Q_N\|_K^{1/{(N|\alpha(i)|)}}\geq \tau^{-}(K).
\end{equation}
Exponentiating the terms in \eqref{exp} by $|\alpha(i)|$ yields \eqref{ti}.
\end{proof}
As a corollary, if we combine \eqref{various cap} and Theorem \ref{tau}, we get
\begin{equation}\label{yet another ineq}
\left\|T_{\alpha(i),1}^{(K)}\right\|_K\geq [\tau^{-}(K)]^{|\alpha(i)|} \geq [T(K)]^{|\alpha(i)|}\geq c(K)^{|\alpha(i)|}.
\end{equation}

Theorem \ref{tau} helps motivate our definition of Widom factors. 
\medskip

{\bf Definition of the $L^2$ Widom factors}: Let $K$ be a non-pluripolar compact subset of $\bb C^n$. Let $\nu_K$ denote the Monge--Amp\`ere measure of $K$ from (\ref{mameas}). Let $w$ be a Borel measurable non-negative function $w:K\rightarrow [0,\infty]$ such that $w\in L^1(\nu_K)\setminus\{0\}$ and so $\mathrm{supp}(w\, d\nu_K)$ is non-pluripolar (see e.g. \cite[Theorem A.III.2.5]{ST97}) . Let $V_{\alpha(i)}\in \mathcal P(\alpha(i))$ be an orthogonal polynomial for the measure $w\, d\nu_K$ with leading term $z^{\alpha(i)}$. We define the $\alpha(i)$-th $L^2$ Widom factor for $w$ on $K$ by
\begin{equation}\label{wi1}
	W_{2,\alpha(i)}(K,w):= \frac{\|V_{\alpha(i)}\|_{L^2(w\,d\nu)}}{[\tau^{-}(K)]^{|\alpha(i)|}}.
\end{equation}

{\bf Definition of the Widom factors for the sup norm}: Let $K$ be a non-pluripolar compact subset of $\bb C^n$ and $\widehat{w}:K\rightarrow [0,\infty)$ be a bounded upper semicontinuous weight function such that $\{z: \widehat{w}(z) > 0 \}$ is non-pluripolar. We define the  $\alpha(i)$-th Widom factor for the sup-norm for the weight function $\widehat{w}$ on $K$ by
\begin{equation}\label{wi2}
	W_{\infty,\alpha(i)}(K,\widehat{w})= \frac{\left\|T_{\alpha(i),\widehat{w}}^{(K)}\right\|_{K}}{[\tau^{-}(K)]^{|\alpha(i)|}}.
\end{equation}

Note that, \eqref{yet another ineq} immediately implies that
\begin{equation}
W_{\infty,\alpha(i)}(K,1)\geq 1, \,\,\, \mbox{ for } i\geq 1.
\end{equation}

In the sequel, we will justify these definitions by finding additional generalizations of one dimensional results concerning Widom factors. 
We next give explicit calculations for the unit polydisk and the Euclidean unit ball.

\begin{example}\label{triv ex}
	 Let $K=\{z\in \bb C^n: ||z||\leq 1\}$ be the unit polydisk. The unweighted Chebyshev polynomials are the monomials $z^{\alpha(i)}$. Here, $\left\|z^{\alpha(i)}\right\|_K=1$ for all indices. Therefore $\tau^{-}(K)=1$. This implies that we have the equality 
\begin{equation}
W_{\infty,\alpha(i)}(K,1)= 1.
\end{equation}
	for each $i$. This is a generalization of the well-known result for $\overline{\mathbb{D}}$ on $\mathbb{C}$.
\end{example}

\begin{example}\label{ex2}
Let $K=\{(z=(z_1,z_2): |z_1|^2+|z_2|^2\leq 1\}$ be the Euclidean unit ball in $\bb C^2$. For any $\alpha(i)=(\alpha_1, \alpha_2)\in \bb N_0^2$, the monomial $z^{\alpha(i)}$ is a Chebyshev polynomial for $w=1$ (see e.g. \cite[Proposition 4]{BloCal}). It is easy to see that the sup-norm of $z^{\alpha(i)}$ on $K$ occurs on the set $|z_1|^2+|z_2|^2=1$. Thus the problem to compute $\left\|T_{\alpha(i),1}^{(K)}\right\|_K$ reduces to computing the maximum value of $|z_1|^{\alpha_1} \left(\sqrt{1-|z_1|^2}\right)^{\alpha_2}$ on $|z_1|\leq 1$. It is easy to see that the maximum is realized if $|z_1|^2=\frac{\alpha_1}{\alpha_1+\alpha_2}$. With the convention $0^0:=1$, we have 
\begin{equation}\label{alpha version}
  \left\|z^{\alpha(i)}\right\|_K= \frac{\alpha_1^{\alpha_1/2}\alpha_2^{{\alpha_2}/2}} {(\alpha_1+\alpha_2)^{(\alpha_1+\alpha_2)/2}}.
\end{equation}  

For any $\theta=(\theta_1, \theta_2)\in \Sigma_0$ we have
\begin{equation}\label{euc ball}
	\tau(K,\theta)= \theta_1^{\theta_1/2}\theta_2^{\theta_2/2}.
\end{equation}
It is not difficult to see that $\tau(K,\theta)$ is minimized if $\theta_1=\theta_2=1/2.$ Hence 
\begin{equation}
	\tau^{-}(K)= 1/\sqrt{2}.
\end{equation}
For $\alpha(i)=(N,N)$,
$$\left\|z^{\alpha(i)}\right\|_K=\frac{1}{2^N}.$$
This implies that, for such $\alpha(i)$, 
\begin{equation}
W_{\infty,\alpha(i)}(K,1)= 1.
\end{equation}
For the directions $(1,0)$ and $(0,1)$, in view of \eqref{alpha version}, \eqref{euc ball} and \eqref{taulow}, we have
\begin{equation}\label{tau counter}
	\tau_{-}(K,(1,0))= \tau(K,(1,0))=\tau_{-}(K,(0,1))= \tau(K,(0,1))=1.
\end{equation} 
Next, we calculate $c(K)$ and $C(K)$. For $z\in \bb C^2\setminus K$, (see \cite[p. 32]{Lev12}) 
\begin{equation}
V_K(z)=V_K^*(z)=\log^+|z|.
\end{equation}
Thus, it is easy to see that $c(K)=\frac{1}{\sqrt 2}$ and $C(K)=1$. Hence for this case, we have 
$$c(K)=T(K)=\tau^{-}(K)<C(K).$$
\end{example}

\begin{example}
We can get an upper bound on $\tau^{-}(K)$ for a large class of compact sets $K$ by using the inequality 
$\tau^{-}(K) \leq C(K)$ from (\ref{various cap}) and bounding $C(K)$ from above. To describe this class, we 
begin by noting for the standard simplex 
$$S:=\{x=(x_1,...,x_n)\in \mathbb R^n: x_j\geq 0, \ \sum_{j=1}^n x_j \leq 1\}$$
in $\mathbb R^n$ we have
$$V_S(z)=\log h(|z_1|+\cdots +|z_n| +|z_1 +\cdots + z_n -1|)$$
where $h(t)=t+\sqrt{t^2-1}$ (cf., \cite{K}, Example 5.4.7). Then
$$\limsup_{|z|\to \infty}[V_S(z)-\log |z|]=\log (4\sqrt n)$$
where $|z|^2:=\sum_{j=1}^n |z_j|^2$. The $\sqrt n$ comes from the upper bound in 
the $l^1-l^2$ norm comparison:
$$|z_1|+\cdots +|z_n|\leq \sqrt n \cdot [\sum_{j=1}^n |z_j|^2]^{1/2}$$
with equality if, e.g., $|z_1|=\cdots =|z_n|$. It follows that $$C(S)=\frac{1}{4\sqrt n}.$$

Now let $K$ be a general $n-$simplex in $\mathbb R^n$. We can write 
$K=T(S)$ where $T$ is a real affine map; i.e., 
$$T(x)= a + Ax, \ x\in \mathbb R^n,$$
where $a\in \mathbb R^n$ and $A\in GL(n,\mathbb R)$. In particular, $T(z)$ is a complex affine map 
in $\mathbb C^n$ and from \cite{K}, Sec. 5.3,
$$V_K(z)=V_S(T^{-1}(z)).$$
Then
$$\limsup_{|z|\to \infty}[V_K(z)-\log |z|]$$
$$=\limsup_{|z|\to \infty}[V_S(T^{-1}(z))-\log |T^{-1}(z)|+\log |T^{-1}(z)|-\log |z|]$$
Now observe that the computation of 
$$\limsup_{|z|\to \infty}[V_S(T^{-1}(z))-\log |T^{-1}(z)|]$$
is the same as that of 
$$\limsup_{|z|\to \infty}[V_S(z)-\log |z|]$$
for we encounter the same $l^1-l^2$ norm comparison now for $T^{-1}(z)$ instead of $z$. For an 
upper bound on 
$$\log |T^{-1}(z)|-\log |z|] =\log \frac{|T^{-1}(z)|}{|z|},$$
we can take the log of the absolute value of the biggest eigenvalue $\Lambda$ of $(A^{-1})^T A^{-1}$. With this notation,
$$\limsup_{|z|\to \infty}[V_K(z)-\log |z|]\leq \log (4\sqrt n)+ \log |\Lambda|$$
and hence
$$C(K)\geq C(S) \frac{1}{|\Lambda|}.$$


Now let $K$ be an arbitrary compact, convex polytope in $\mathbb R^n$. Then $K$ can be obtained as the intersection of a finite collection of simplices or strips $S(K)$. S. Ma'u \cite{sione} has proved the following formula for the extremal function $V_K$ of such a set:
$$V_K(z)=\max\{V_E(z): E\in S(K)\}.$$
Thus we have 
$$\limsup_{|z|\to \infty}[V_K(z)-\log |z|]\geq \max_{E\in S(K)} \bigl( \limsup_{|z|\to \infty}[V_E(z)-\log |z|]\bigr)$$
so that
$$C(K)\leq \min_{E\in S(K)} C(E),$$
giving a calculable upper bound on $\tau^{-}(K)$.
\end{example}

\section{General Lower bounds}
We first start with a general result.
Let $K$ be a non-pluripolar compact subset of $\mathbb{C}^n$. For a Borel measurable weight function $w:K\rightarrow [0,\infty]$ with $w\in L^1(\nu_K)\setminus \{0\}$, in analogy with (\ref{oneseven}) we define the exponential relative entropy function by
\begin{equation}\label{relent}
	S(K,w)=\exp\left[\int \log{w}\,d\nu_K\right].
\end{equation}  
We say that $w$ satisfies the Szeg\H{o} condition on $K$ if $S(K,w)>0$. We set $e^{-\infty}:=0$.

Note that $S(K,w)>0$ implies $\mathrm{supp}(w d\nu_K)=\mathrm{supp}(d\nu_K)$. Since the support of $\nu_K$ is non-pluripolar (see e.g. \cite[Appendix A.III.2.5]{ST97}), this also implies $\mathrm{supp}(w d\nu_K)$ is non-pluripolar in that case. Thus the Szeg\H{o} condition ensures the linear independence of the monomials in $L^2(wd\nu_K)$. In addition, $S(K,w)>0$ implies that $w>0$ $\nu_K$-a.e. and in particular regularity of $d\mu=w\,d\nu_K$, (see \cite[Proposition 3.5.]{LevFrank24} and \cite[p. 442]{Bloom}). 

The case when $S(K,w)=0$ gives only trivial lower bounds for the norms of extremal polynomials. Hence we always assume the Szeg\H{o} condition for the statements of the results below.

\begin{theorem}\label{threeone} 
	Let $K$ be a non-pluripolar compact subset of $\bb C^n$ and $d\mu=w\, d\nu_K$ a finite positive Borel measure with $S(K,w)>0$. Let $P$ be a polynomial. Then
	\begin{equation}\label{jen}
		\|P\|_{L^2(\mu)}^2\geq S(K,w) \exp\left[\int \log|P^2| \,d\nu_K\right].
	\end{equation}
\end{theorem}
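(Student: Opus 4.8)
The plan is to reduce the inequality \eqref{jen} to an application of Jensen's inequality with respect to the probability measure $\nu_K$, exactly as in the univariate argument. First I would dispense with trivial cases: if $P\equiv 0$ both sides are zero, and if the right-hand side is zero (i.e. $\int \log|P|^2\,d\nu_K = -\infty$, which can happen since the zero set of $P$, while pluripolar, may carry positive $\nu_K$-mass... actually it cannot, as $\nu_K$ puts no mass on pluripolar sets, so for $P\not\equiv 0$ the integral $\int\log|P|\,d\nu_K$ is well-defined in $(-\infty,\infty)$, but could still be $-\infty$ only if $\log|P|\notin L^1(\nu_K)$; in any case if the RHS is $0$ the inequality is trivial), so assume $P\not\equiv 0$ and $S(K,w)>0$.

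The core computation: write $\mu = w\,d\nu_K$ with $\nu_K$ a probability measure, and apply Jensen's inequality to the convex function $\exp$ (equivalently, the concavity of $\log$) against $\nu_K$. We have
\begin{equation}
\int \log\bigl(|P|^2 w\bigr)\,d\nu_K \;\le\; \log\int |P|^2 w\,d\nu_K \;=\; \log \|P\|_{L^2(\mu)}^2,
\end{equation}
provided the left-hand integrand is quasi-integrable, which it is: $\log(|P|^2 w) = \log|P|^2 + \log w$, and $\log w$ has integral $\log S(K,w) > -\infty$ by hypothesis while $\log|P|^2$ is bounded above on the compact set $K$ (as $|P|$ is continuous) and bounded below away from $-\infty$ in integral by the pluripolarity of $\{P=0\}$ together with $\nu_K(\{P=0\})=0$; if $\int\log|P|^2\,d\nu_K=-\infty$ the target inequality is trivial as noted. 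Splitting the left side,
\begin{equation}
\int \log|P|^2\,d\nu_K + \int \log w\,d\nu_K \;\le\; \log \|P\|_{L^2(\mu)}^2,
\end{equation}
and exponentiating gives
\begin{equation}
\exp\Bigl[\int \log|P|^2\,d\nu_K\Bigr]\cdot S(K,w) \;\le\; \|P\|_{L^2(\mu)}^2,
\end{equation}
which is precisely \eqref{jen}.

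The one genuine subtlety — the point I would be most careful about — is the justification that Jensen's inequality applies, namely that the integrand $\log(|P|^2 w)$ is $\nu_K$-quasi-integrable (its positive part integrable, or the whole thing honestly integrable) so that the chain of equalities and the single inequality are all legitimate rather than of the form $-\infty \le \text{something}$ read backwards. Since $\nu_K$ is a probability measure supported on a compact set, $\log^+(|P|^2 w)$ is dominated by a constant plus $\log^+ w \in L^1(\nu_K)$ (using $w \in L^1(\nu_K)$, whence $\log^+ w \le w \in L^1$), so the positive part is fine; the negative part integral is either finite or $-\infty$, and in the latter case the right side of \eqref{jen} is $0$ and there is nothing to prove. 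This case analysis is routine once set up, so the proof is short; the essential content is just Jensen against the Monge–Ampère probability measure, which is why the definition \eqref{relent} of $S(K,w)$ via $\nu_K$ was the right one.
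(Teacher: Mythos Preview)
Your argument is correct and is essentially the same as the paper's: both apply Jensen's inequality with respect to the probability measure $\nu_K$ to the integrand $\log(|P|^2 w)$, then split the logarithm and exponentiate. Your version is in fact more careful than the paper's about the quasi-integrability of $\log(|P|^2 w)$ and the trivial cases, which the paper simply passes over.
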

\begin{proof}
	Since $\nu_K$ is a probability measure, it follows from Jensen's inequality that
	\begin{align}
		\|P\|_{L^2(\mu)}^2&= 
		\exp\left[\log \int\left( \left|P^2\right|w\right) \,d\nu_K\right]
		\nonumber\\
		&\geq
		\exp\left[ \int \log\left(\left|P^2\right|w\right) \,d\nu_K\right]
		\nonumber\\
		&= \exp\left[ \int \log|P^2| \,d\nu_K\right]\exp\left[ \int \log{w} \,d\nu_K\right] \nonumber\\
		&= S(K,w) \exp\left[\int \log|P^2| \,d\nu_K\right].
	\end{align}
\end{proof}

We show that for any non-pluripolar compact $K\subset \mathbb{C}^n$, these integrals $\int \log|P^2| \,d\nu_K$ are finite for holomorphic polynomials $P\not \equiv 0$. To this end, we will need the following version of the Chern--Levine--Nirenberg estimate (cf., Theorem 1.8 of \cite{demailly}). 
\medskip

\begin{proposition}\label{cln} Let $K$ be a compact subset of $\Omega\subset\mathbb{C}^n$. There is a constant $C_{K,\Omega}$ such that for every $
v\in\operatorname{PSH}(\Omega)
$ 
and 
$
u_1,\dots,u_p\in \operatorname{PSH}(\Omega)\cap L^\infty(\Omega),\quad 1\le p\le n,
$ 
one has
\[
\|v\, dd^c u_1\wedge\cdots\wedge dd^c u_p\|_K \le C_{K,\Omega}\|v\|_{L^1(\Omega)}
\|u_1\|_{L^\infty(\Omega)}\cdots\|u_p\|_{L^\infty(\Omega)}.
\]

\end{proposition}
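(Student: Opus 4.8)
The plan is to adapt the classical proof of the Chern--Levine--Nirenberg inequalities, tracking constants so that the final one depends only on $K$ and $\Omega$. Here $\|\cdot\|_K$ of a current is its mass on $K$, so the left-hand side is a fixed multiple of $\int_K|v|\,dd^c u_1\wedge\cdots\wedge dd^c u_p\wedge\beta^{n-p}$ with $\beta=dd^c|z|^2$. I would begin with three reductions. First, since the $L^1(\Omega)$ and $L^\infty(\Omega)$ norms only decrease when $\Omega$ shrinks, it suffices to prove the bound with a relatively compact $\Omega'$, $K\Subset\Omega'\Subset\Omega$, in place of $\Omega$; I fix once and for all a finite chain of such neighborhoods of $K$ and smooth cut-offs $0\le\psi\le1$ that are $\equiv 1$ near the inner ones and supported in the outer ones, absorbing all geometric constants (in particular bounds $|dd^c\psi|\le C_\psi\beta$ and $|i\partial\psi\wedge\bar\partial\psi|\le C_\psi\beta$) into $C_{K,\Omega}$. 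Second, since $v\in\operatorname{PSH}(\Omega)$, the sub-mean value inequality gives $|\sup_{\overline{\Omega'}}v|\le C\|v\|_{L^1(\Omega')}$, and subtracting this constant from $v$ changes the left side by at most $|\sup_{\overline{\Omega'}}v|\cdot\|dd^c u_1\wedge\cdots\wedge dd^c u_p\|_K$, which is controlled by the unweighted case below; so I may assume $v\le 0$. Third, $\max(v,-N)$ is plurisubharmonic, bounded, and decreases to $v$ with $\|\max(v,-N)\|_{L^1(\Omega')}\le\|v\|_{L^1(\Omega')}$, so by monotone convergence it suffices to treat bounded $v$; a further decreasing regularization of $v$ and of the $u_j$ by smooth plurisubharmonic functions, together with continuity of the mixed Monge--Amp\`ere operator along decreasing sequences, reduces everything to the case in which all functions are smooth. (The limiting process simultaneously shows $\int_K|v|\,dd^c u_1\wedge\cdots\wedge dd^c u_p\wedge\beta^{n-p}<\infty$.)

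One then argues by induction on $p$. The base ingredient is the unweighted estimate: for bounded plurisubharmonic $w_1,\dots,w_q$ and a closed positive current $R$, one has $\|dd^c w_1\wedge\cdots\wedge dd^c w_q\wedge R\|_K\le C\,\|w_1\|_{L^\infty(\Omega')}\cdots\|w_q\|_{L^\infty(\Omega')}\,\|R\|_{\Omega'}$; this follows by induction on $q$, bounding the mass on $K$ by $\int\psi\,dd^c w_q\wedge(\cdots)$, integrating by parts (Stokes applies since $\psi$ has compact support and the remaining current is closed) to get $\int w_q\,dd^c\psi\wedge(\cdots)$, and using $|dd^c\psi|\le C_\psi\beta$ to pass to a current with one fewer Monge--Amp\`ere factor. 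For the weighted statement with $p$ factors, write (using the freedom to add a constant to $u_p$, so that $u_p\le 0$ on $\Omega'$)
\[
\int_K(-v)\,dd^c u_1\wedge\cdots\wedge dd^c u_p\wedge\beta^{n-p}\le\int\psi\,(-v)\,dd^c u_p\wedge\Gamma=\int u_p\,dd^c\!\big(\psi(-v)\big)\wedge\Gamma ,
\]
where $\Gamma:=dd^c u_1\wedge\cdots\wedge dd^c u_{p-1}\wedge\beta^{n-p}$ is closed and positive, and expand $dd^c(\psi(-v))=-\psi\,dd^c v-(-v)\,dd^c\psi-2i\big(\partial\psi\wedge\bar\partial(-v)+\partial(-v)\wedge\bar\partial\psi\big)$. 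Integrated against $u_p\,\Gamma$, the first two terms are dominated by $C\|u_p\|_{L^\infty(\Omega')}\int_{\supp\psi}|v|\,\beta\wedge\Gamma$, which is exactly the inductive quantity with $p-1$ Monge--Amp\`ere factors and is therefore handled by the induction hypothesis.

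The step I expect to be the main obstacle is the mixed term $\int u_p\,\partial\psi\wedge\bar\partial v\wedge\Gamma$ (and its conjugate). Since $\bar\partial\Gamma=0$, an integration by parts turns it into $-\int v\,\bar\partial u_p\wedge\partial\psi\wedge\Gamma-\int v\,u_p\,\bar\partial\partial\psi\wedge\Gamma$; the second summand is again a $(p-1)$-level quantity, and the first is estimated by the Cauchy--Schwarz inequality for the positive current $\Gamma$ as $\big(\int_{\supp\psi}|v|\,i\partial u_p\wedge\bar\partial u_p\wedge\Gamma\big)^{1/2}\big(\int_{\supp\psi}|v|\,i\partial\psi\wedge\bar\partial\psi\wedge\Gamma\big)^{1/2}$, whose second factor is $\le(CD)^{1/2}$ with $D:=\|v\|_{L^1(\Omega')}\prod_{j<p}\|u_j\|_{L^\infty(\Omega')}$, again by induction. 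Thus everything reduces to bounding the ``energy'' $F:=\int_{\supp\psi}|v|\,i\partial u_p\wedge\bar\partial u_p\wedge\Gamma$, which is a priori finite since all data are smooth.

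To bound $F$, use the pointwise identity $i\partial u_p\wedge\bar\partial u_p=\tfrac14 dd^c(\tilde u^2)-\tfrac12\tilde u\,dd^c u_p$ with $\tilde u:=u_p+\|u_p\|_{L^\infty(\Omega')}\ge 0$ plurisubharmonic and bounded; the second term contributes a nonpositive quantity which is discarded, while $\tfrac14\int\psi_2(-v)\,dd^c(\tilde u^2)\wedge\Gamma$ (with $\psi_2\equiv 1$ on $\supp\psi$) is integrated by parts once more onto $\psi_2(-v)$. Since $\|\tilde u^2\|_{L^\infty(\Omega')}\le C\|u_p\|_{L^\infty(\Omega')}^2$, the resulting pieces are either $(p-1)$-level quantities bounded by $C\|u_p\|_{L^\infty(\Omega')}^2D$, or --- because $\bar\partial(\tilde u^2)=2\tilde u\,\bar\partial u_p$ --- one further term which Cauchy--Schwarz again bounds by $C\|u_p\|_{L^\infty(\Omega')}\,D^{1/2}F^{1/2}$. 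Altogether $F\le C\|u_p\|_{L^\infty(\Omega')}^2D+C\|u_p\|_{L^\infty(\Omega')}D^{1/2}F^{1/2}$, a quadratic inequality in $F^{1/2}$ (legitimate because $F<\infty$) which gives $F\le C\|u_p\|_{L^\infty(\Omega')}^2D$. Feeding this back, the mixed term is $\le F^{1/2}(CD)^{1/2}\le C\|u_p\|_{L^\infty(\Omega')}D=C\|v\|_{L^1(\Omega')}\prod_{j\le p}\|u_j\|_{L^\infty(\Omega')}$, which together with the earlier terms completes the inductive step for smooth data; undoing the reductions of the first paragraph yields the statement in general. The main technical burdens are this self-improving estimate for $F$, the careful nesting of the cut-offs $\psi,\psi_2,\dots$ so that the auxiliary integrals really do sit at the claimed level, and the two approximation arguments; the remainder is bookkeeping.
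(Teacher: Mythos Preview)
The paper does not actually prove this proposition: it is stated with a reference ``(cf., Theorem 1.8 of \cite{demailly})'' and then immediately used. So there is no ``paper's own proof'' to compare against; what you have written is essentially a detailed outline of the standard argument one finds in Demailly's notes, and it is along the right lines.

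A couple of small points worth cleaning up. Your reduction to $v\le 0$ is phrased via the bound $|\sup_{\overline{\Omega'}}v|\le C\|v\|_{L^1(\Omega')}$, but only the one-sided estimate $\sup_{\overline{\Omega'}}v\le C\|v\|_{L^1(\Omega)}$ follows from the sub-mean-value property; the absolute value could be large if $v$ is very negative. This is harmless since in that case $v\le 0$ already, and in any case what you really need is that $\tilde v:=v-\sup_{\overline{\Omega'}}v$ satisfies $\tilde v\le 0$ with $\|\tilde v\|_{L^1(\Omega')}\le C\|v\|_{L^1(\Omega)}$, which does hold. Also, in the energy step, discarding the term $-\tfrac12\int\psi_2(-v)\tilde u\,dd^c u_p\wedge\Gamma$ as ``nonpositive'' requires $-v\ge 0$, $\tilde u\ge 0$, and $dd^c u_p\wedge\Gamma\ge 0$, all of which you have, so that is fine. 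Finally, make sure your chain of cut-offs and neighborhoods is fixed at the outset (independent of $p$), since the induction passes through several of them; you allude to this but it deserves one explicit sentence.
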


\noindent The norm on the left is the mass norm of the current $v\, dd^c u_1\wedge\cdots\wedge dd^c u_p$ 
on $K$; for $p=n$, this is simply $$\int_K |v|  dd^c u_1\wedge\cdots\wedge dd^c u_n.$$

Thus for $\Omega$ bounded, $K\subset\Omega$ compact, and any $u\in\operatorname{PSH}(\Omega)\cap L^\infty(\Omega)$, taking 
$v=\log|P|$ for a polynomial $P$ and $u_1=\cdots =u_n=u$ gives
\[
\int_K |v|(dd^c u)^n \le C_{K,\Omega}\Biggl[\int_\Omega |v|\,dm\Biggr]
\Bigl(\|u\|_{L^\infty(\Omega)}\Bigr)^n < \infty,
\]
since plurisubharmonic functions are locally integrable (here $dm$ denotes Lebesgue measure). This holds, in particular, if $K$ is non-pluripolar and 
\[
(dd^c u)^n=(dd^c V_K^*)^n=(2\pi)^n d\nu_K
\]
is the Monge--Amp\`ere measure of the extremal function $V_K^*$.

We can use Theorem \ref{threeone} to find lower bounds for $L^2-$norms of polynomials.

\begin{example}
	Let $P_d\in \mathcal P(\alpha(i))$ with degree $d=|\alpha(i)|$ and let $K$ be the Euclidean unit ball in $\bb C^2$. Let $d\sigma$ be normalized surface area measure on $\partial K$. 
	By \cite[Lemma 10.13]{demailly}, we have 
\begin{equation}\label{o3}
		\int \log{|P_d|} \, d\sigma \geq \log{\|P_d \|_K}- d \inf_{0<r<1}\left(\frac{(1+r)^{2n-1}}{1-r}. \log{\frac{1}{r}} \right)
\end{equation}
By Example \ref{ex2} above, $\tau^-(K)= \sqrt{2}/{2}$. Hence, it follows from Theorem \ref{tau} that 
	\begin{equation}\label{o2}
		\|P_d\|_K\geq \left(\frac{\sqrt{2}}{2} \right)^d.
	\end{equation}
Substituting \eqref{o2} into \eqref{o3}, we get the following:
\begin{equation}\label{o1}
	\int \log{|P_d|} \, d\sigma \geq d \left[\log{\frac{\sqrt{2}}{2}}-\inf_{0<r<1}\left(\frac{(1+r)^{2n-1}}{1-r} \log{\frac{1}{r}} \right) \right].
\end{equation}
Note that $d\sigma= d\nu_K$, see \cite[Example 6.5.6]{K}. Let $\mu=w d\nu_K$ be a finite Borel measure with $S(K,w)>0$ and $P_d$ be a monic polynomial of degree $d$ as above. Then in view of \eqref{o1} and \eqref{jen}, we get
\begin{equation}\label{seq1}
	\|P_d\|_{L^2(\mu)}^2\geq \frac{S(K,w)}{2^{d} \exp\left[ 2d \inf_{0<r<1}\left(\frac{(1+r)^{2n-1}}{1-r} \log{\frac{1}{r}}\right)\right]}.  
\end{equation} 
\end{example}

The inequality \eqref{mah1} below is due to Mahler \cite{mahler}.
\begin{theorem}\label{mah the}
	Let $K= \overline{\mathbb{D}}^n$ be the unit polydisk and $P\in \mathcal P(\alpha(i))$ be a non-constant polynomial where $\alpha(i)=(\alpha_1,\ldots, \alpha_n)$. Let $d\nu =\frac{d\theta_1\cdots d\theta_n}{(2\pi)^n}$ where $\frac{d\theta_j}{2\pi}$ is the normalized arclength measure on the $j$-th copy of $\partial \mathbb{D}$ . Then 
	\begin{equation}\label{mah1}
		\exp\left[\int \log{|P|}\, d\nu \right] \geq \frac{1}{\binom{|\alpha(i)|}{\alpha_1} \cdots \binom{|\alpha(i)|}{\alpha_n}}
	\end{equation}

\end{theorem}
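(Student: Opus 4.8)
The plan is to reduce the multivariate statement to Mahler's classical one-dimensional inequality applied in each variable separately, exploiting the product structure of both the polydisk and the measure $d\nu$. Recall Mahler's univariate bound: if $p(z) = c\prod_{k}(z-\zeta_k)$ is a polynomial of degree $m$ in one complex variable, then $\exp\left[\int_{0}^{2\pi}\log|p(e^{i\theta})|\,\frac{d\theta}{2\pi}\right] = |c|\prod_k \max(1,|\zeta_k|)$ is the usual Mahler measure, and the coefficients of $p$ are bounded by the Mahler measure times the appropriate binomial coefficient; equivalently, comparing the sup-norm coefficient of $p$ against its Mahler measure yields $\exp\left[\int \log|p|\,d\theta/2\pi\right] \ge (\text{leading-related coefficient})/\binom{m}{j}$ for the coefficient of $z^j$. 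The key univariate input I would isolate is: for a polynomial $p$ in one variable of degree $m$ whose coefficient of $z^{\alpha}$ equals $1$ (so $p$ is ``monic in the $\alpha$-slot''), one has $\exp\left[\int\log|p|\,\frac{d\theta}{2\pi}\right]\ge 1/\binom{m}{\alpha}$. This follows from Mahler's coefficient estimate $|a_\alpha|\le\binom{m}{\alpha}\,\mathrm{M}(p)$ with $a_\alpha=1$.

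First I would fix the total degree $d:=|\alpha(i)|$ and set up an induction on $n$ (or, equivalently, peel off one variable at a time). Write $P(z_1,\dots,z_n)=\sum_{k} z_n^{k}\,Q_k(z_1,\dots,z_{n-1})$, grouping by powers of the last variable; since $P\in\mathcal P(\alpha(i))$ has leading term $z^{\alpha(i)}$, the coefficient $Q_{\alpha_n}$ contains the term $z_1^{\alpha_1}\cdots z_{n-1}^{\alpha_{n-1}}$ with coefficient $1$. By Fubini, $\int\log|P|\,d\nu = \int_{(\partial\mathbb D)^{n-1}}\Bigl(\int_{\partial\mathbb D}\log|P(z',e^{i\theta_n})|\,\frac{d\theta_n}{2\pi}\Bigr)d\nu'(z')$. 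For each fixed $z'=(z_1,\dots,z_{n-1})\in(\partial\mathbb D)^{n-1}$, apply the univariate estimate in the variable $z_n$ to the polynomial $\theta_n\mapsto P(z',e^{i\theta_n})$, which has degree $\le d$ in $z_n$: this gives $\int_{\partial\mathbb D}\log|P(z',e^{i\theta_n})|\,\frac{d\theta_n}{2\pi}\ge \log|Q_{\alpha_n}(z')| - \log\binom{d}{\alpha_n}$. Integrating over $z'$ and using Jensen (or simply monotonicity of the integral) reduces the problem to the same statement for $Q_{\alpha_n}$, a polynomial in $n-1$ variables with the monomial $z_1^{\alpha_1}\cdots z_{n-1}^{\alpha_{n-1}}$ having coefficient $1$; iterating down to $n=1$ accumulates exactly the factors $\binom{d}{\alpha_1},\dots,\binom{d}{\alpha_n}$ in the denominator, giving \eqref{mah1}.

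The main subtlety I anticipate is bookkeeping the degrees correctly: after slicing in $z_n$ the ``coefficient polynomial'' $Q_{\alpha_n}$ need not have total degree $d$, only degree $\le d-\alpha_n$ in the remaining variables, yet the binomial coefficient extracted at the next stage should still be $\binom{d}{\alpha_{n-1}}$ rather than $\binom{d-\alpha_n}{\alpha_{n-1}}$. This is fine because Mahler's coefficient bound $|a_\alpha|\le\binom{m}{\alpha}\mathrm{M}(p)$ is monotone in $m$ (one may always regard a degree-$m'$ polynomial as having degree $m\ge m'$, and $\binom{m}{\alpha}$ only grows), so using $m=d$ at every stage is legitimate and yields the stated clean form. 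A second minor point is that $P(z',e^{i\theta_n})$ could in principle vanish identically for some $z'$ in a set of positive measure, but since $P$ is a nonzero polynomial its zero set in $(\partial\mathbb D)^n$ has measure zero, and $\int\log|P|\,d\nu>-\infty$ by the Chern--Levine--Nirenberg discussion preceding the theorem (applied to the non-pluripolar polydisk), so all the integrals involved are well-defined and finite; one handles the a.e.\ issues by the standard convention that the inequality is interpreted in $[-\infty,\infty)$ and passes to the limit.
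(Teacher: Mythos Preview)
Your proposal is correct and is essentially the same approach as the paper's. The paper's own proof is a one-line citation to Mahler's original 1962 paper, and what you have written is precisely Mahler's inductive argument: peel off one variable at a time, applying at each step the univariate coefficient estimate $|a_\alpha|\le\binom{m}{\alpha}\,M(p)$ (with $m$ taken to be the full total degree $d=|\alpha(i)|$, using monotonicity of $\binom{m}{\alpha}$ in $m$), and accumulate the factors $\binom{d}{\alpha_j}$. This same variable-by-variable scheme is in fact reproduced later in Section~5 of the paper (see the recursive definitions of $P_{k_1\ldots k_N}$ and the chain of inequalities leading to~\eqref{K4}) when the authors generalize Mahler's theorem to arbitrary product sets, so your reconstruction matches both the cited source and the paper's own methodology.
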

\begin{proof}
The inequality \eqref{mah1} follows from  \cite[eq. (3)]{mahler} where $1$ in the numerator of the right side of the inequality \eqref{mah1} is just the leading coefficient of $P$.
\end{proof}

\begin{remark}
 Note that the normalized arclength measure is indeed the equilibrium measure of $ \overline{\mathbb{D}}$. Hence, for   $K= \overline{\mathbb{D}}^n$, we have $d\nu=\frac{d\theta_1\cdots d\theta_n}{(2\pi)^n}=d\nu_K$, see  \cite[Theorem 1]{bloc}. Let $\mu=w d\nu_K$ be a finite Borel measure with $S(K,w)>0$ Then, in view of \eqref{mah1} and \eqref{jen} we get
\begin{equation}\label{mah2}
\|P\|_{L^2(\mu)}^2\geq \frac{S(K,w)}{\left[\binom{|\alpha(i)|}{\alpha_1} \cdots \binom{|\alpha(i)|}{\alpha_n}\right]^2}.
\end{equation}	
where $P$ is as in Theorem \ref{mah the}.
\end{remark}

\section{Orthogonal and Chebyshev Polynomials on Product Sets}

We study orthogonal and Chebyshev polynomials on product sets 
$K = K_1 \times \cdots \times K_n$, where each $K_j$ is a non-polar compact set in $\bb C$. We start with a basic observation.
\begin{theorem}
	Let $K=K_1\times \cdots \times K_n$, where each $K_j$, $j=1,\ldots, n$, is a non-polar compact subset of $\mathbb{C}$. Then 
	\begin{equation}\label{minus}
		\tau^-(K)=\min_{j} \ca(K_j).
	\end{equation}
\end{theorem}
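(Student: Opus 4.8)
The plan is to compute the directional Chebyshev constants $\tau(K,\theta)$ for the product set $K = K_1 \times \cdots \times K_n$ and then apply \eqref{tau-}, which says $\tau^-(K) = \inf_{\theta \in \Sigma_0} \tau(K,\theta)$. The key structural fact is that Chebyshev-type extremal problems factor over product sets: if $Q_j(z_j)$ is a monic polynomial in the single variable $z_j$ of degree $m_j$, then the tensor-product polynomial $\prod_{j=1}^n Q_j(z_j)$ is a ``monic'' element of $\mathcal P_i$ for the appropriate multi-index $\alpha(i) = (m_1,\ldots,m_n)$, and $\bigl\|\prod_j Q_j\bigr\|_K = \prod_j \|Q_j\|_{K_j}$. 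Taking each $Q_j$ to be the univariate Chebyshev polynomial $T_{m_j}^{(K_j)}$ of $K_j$ gives, for a degree sequence with $\alpha(i)/|\alpha(i)| \to \theta = (\theta_1,\ldots,\theta_n) \in \Sigma_0$, the upper bound
\begin{equation}\label{prodbound}
\tau(K,\theta) \le \prod_{j=1}^n \bigl[\lim_{m\to\infty} \|T_m^{(K_j)}\|_{K_j}^{1/m}\bigr]^{\theta_j} = \prod_{j=1}^n \ca(K_j)^{\theta_j},
\end{equation}
using the classical univariate fact (e.g. \cite[Cor. 5.5.5]{Ran95}) that $\|T_m^{(K_j)}\|_{K_j}^{1/m} \to \ca(K_j)$.

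For the matching lower bound I would argue that the univariate Chebyshev bound restricts well to slices. Given any $P \in \mathcal P(\alpha(i))$ with $\alpha(i) = (\alpha_1,\ldots,\alpha_n)$, fix a point $(z_2^0,\ldots,z_n^0) \in K_2 \times \cdots \times K_n$; the slice $z_1 \mapsto P(z_1, z_2^0,\ldots,z_n^0)$ is a monic polynomial in $z_1$ of degree $\alpha_1$ (for generic choice of the frozen coordinates — here non-polarity of $K$ and the lexicographic leading-term normalization ensure the leading coefficient survives as a nonzero polynomial in the frozen variables, which by the maximum principle attains a value of modulus at least its own Chebyshev bound), hence $\|P\|_K \ge \|P(\cdot,z_2^0,\ldots,z_n^0)\|_{K_1} \ge \ca(K_1)^{\alpha_1}$ — but this alone only gives $\tau(K,\theta) \ge \ca(K_1)^{\theta_1 \cdot (\text{something})}$, not the full product. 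A cleaner route to the lower bound matching \eqref{prodbound} is to iterate the slicing one variable at a time: freeze $z_2,\ldots,z_n$ and extract a factor $\ca(K_1)^{\alpha_1}$ via the univariate Chebyshev inequality applied to the restriction, but track the resulting monic polynomial in the remaining variables. This is most transparent via the multivariate transfinite-diameter / Chebyshev-constant machinery of Zaharjuta: one has the product formula $\tau(K_1 \times \cdots \times K_n, \theta) = \prod_j \tau(K_j, 1)^{\theta_j} = \prod_j \ca(K_j)^{\theta_j}$ for $\theta \in \Sigma_0$, which is exactly \cite[Theorem 1]{bloc} or follows from the additivity of directional Chebyshev constants under products; I would cite this rather than reprove it.

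Granting \eqref{prodbound} as an equality, it remains to minimize $\prod_{j=1}^n \ca(K_j)^{\theta_j}$ over $\theta \in \Sigma_0$. Writing $\log$ of the objective as $\sum_j \theta_j \log \ca(K_j)$, a linear function of $\theta$ on the simplex, the infimum over $\Sigma_0$ equals $\min_j \log \ca(K_j)$ (approached by concentrating mass near the vertex corresponding to the smallest $\ca(K_j)$, or attained at it if we pass to the closure via \eqref{taulow}), so $\inf_{\theta \in \Sigma_0} \tau(K,\theta) = \min_j \ca(K_j)$, which by \eqref{tau-} is $\tau^-(K)$. This yields \eqref{minus}.

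The main obstacle I anticipate is making the lower-bound half of \eqref{prodbound} fully rigorous: controlling the leading coefficient of $P \in \mathcal P(\alpha(i))$ after freezing variables is delicate because the lexicographic ordering of monomials does not respect a clean ``monic in $z_1$'' structure on slices, so one must either invoke the product formula for Zaharjuta's directional Chebyshev constants (the efficient path, citing \cite{Zak1,bloc}) or do a careful induction on $n$ peeling off one variable at a time while keeping track of an appropriately normalized remainder. Everything else — the tensor-product upper bound and the elementary minimization over the simplex — is routine.
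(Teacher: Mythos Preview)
Your approach is correct in outline but takes a genuinely different and heavier route than the paper. You aim to compute all directional Chebyshev constants via the product formula $\tau(K,\theta)=\prod_j \ca(K_j)^{\theta_j}$ and then minimize over the simplex. The paper bypasses the full computation of $\tau(K,\theta)$ entirely: for the lower bound it simply invokes the already-established inequality $c(K)\le \tau^-(K)$ from \eqref{various cap} together with Zaharjuta's product formula $c(K)=\min_j \ca(K_j)$ (\cite[Section~7, Property~(e')]{Zak1}); for the upper bound it picks the index $j$ with minimal capacity and feeds the univariate Chebyshev polynomials $T_k^{(K_j)}$ (viewed as polynomials on $K$ depending only on $z_j$) directly into the definition of $\tau^-(K)$, using $\|T_k^{(K_j)}\|_K=\|T_k^{(K_j)}\|_{K_j}$ and $\|T_k^{(K_j)}\|_{K_j}^{1/k}\to\ca(K_j)$. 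Your approach yields more information (the full directional profile $\tau(K,\theta)$) but at the cost of the lower-bound difficulty you yourself flag; the paper's argument is a two-line sandwich that never touches that difficulty. One small correction: \cite[Theorem~1]{bloc} is about the Monge--Amp\`ere measure of a product set, not directional Chebyshev constants, so that citation does not give you the product formula you need; the formula is, however, obtainable from Zaharjuta's framework, so your alternative citation route is fine.
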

\begin{proof}
	We have (see \cite[Section 7, Property (e')]{Zak1}), $c(K)=\min  \ca(K_j)$ and in view of \eqref{various cap}
	\begin{equation}\label{c11}
		\tau^-(K)\geq c(K).
	\end{equation}
	Let $j$ be such that $\ca(K_j)=c(K)$ and let $T^{(K_j)}_k$ be the $k$-th unweighted Chebyshev polynomial for $K_j$. 
	Since $$\left\|T^{(K_j)}_k\right\|_K =\left\|T^{(K_j)}_k\right\|_{K_j}$$ and from \cite[Corollary 5.5.5]{Ran95} $$\left\|T^{(K_j)}_k\right\|_{K_j}^{1/k} \rightarrow \ca(K_j),$$ it follows from the definition of $\tau^-(K)$ that 
	\begin{equation}\label{c12}
		\ca(K_j)\geq \tau^-(K).
	\end{equation}
	Combining \eqref{c11} and \eqref{c12} gives the result.
\end{proof}

We consider weight functions of the form:
\[
w(z_1, \dots, z_n) \;=\; w_1(z_1)\,\cdots\, w_n(z_n).
\]

First, we consider orthogonal polynomials. Let $\mu_{K_j}$ denote the equilibrium measure of $K_j$. Then the Monge--Amp\`ere measure $\nu_K$ takes the following form (see  \cite[Theorem 1]{bloc}):
\begin{equation}
	\nu_K=\mu_{K_1} \otimes \cdots \otimes \mu_{K_n}.
\end{equation}
We consider  
\[
d\mu_j(z_j) \;=\; w_j(z_j)\, d\mu_{K_j}(z_j),
\]
where each $\mu_j$ is a finite positive Borel measure on $K_j$. Let
\[
\mu \;=\; \mu_1 \otimes \cdots \otimes \mu_n
\]
We always assume the Szeg\H{o} condition
\[
S(K_j,w_j) \;=\; \exp \left[\int \log w_j \,d\mu_{K_j} \right]> 0
\]
holds for each $j$.

Then, in view of Szeg\H{o}'s condition, $E_j:=\mathrm{supp}(\mu_j)= \mathrm{supp}(\mu_{K_j})$ and $\mathrm{Cap}(E_j)=\mathrm{Cap}(K_j)$, see Lemma~1.2.7 in \cite{ST92}. Hence,
the monomials are linearly independent in $L^2(\mu)$ 
(see p.\ 435--436 in \cite{Bloom} or Example~4.4 in \cite{Siciak}).
An important observation we use below is that as a consequence of Jensen's inequality we have
\begin{equation}\label{additional lower}
\int\,d\mu_j=\exp\left[\log\int w_j\, d\mu_{K_j}\right]\geq \exp\left[\int \log{w_j}\, d\mu_{K_j}\right]=S(K_j,w_j).
\end{equation}

\medskip

Let $V_{\alpha(i)}\in \mathcal P(\alpha(i))$ be an orthogonal polynomial in $L^2(\mu)$. Thus its leading term is
\[
z^{\alpha(i)} \;=\; z_1^{\alpha_1}\,\cdots\, z_n^{\alpha_n}.
\]
Let $P_{\alpha_j}^{(j)} $ be a monic orthogonal polynomial with respect to $\mu_j$ where we assume $P_0^{(j)}=1$.  
We will show that
\begin{equation}\label{orthopol}
V_{\alpha(i)}(z_1,\ldots, z_n) \;=\; \prod_{j=1}^n P_{\alpha_j}^{(j)}(z_j).
\end{equation}
It is enough to show that $V_\alpha\,\perp \,z^\beta$ in $L^2(\mu)$ if $\beta \prec \alpha$ where $\beta=(\beta_1,\ldots, \beta_n)$.  
Note that $\beta \prec \alpha$ implies there is at least one index $k$ such that $\beta_k < \alpha_k$.

\bigskip

\noindent
\subsection{Fubini--Tonelli Argument and Factorization}

Let $\beta\prec \alpha$. Note that $\lvert V_{\alpha(i)}\,z^\beta \rvert$ is integrable with respect to $\mu$, so the 
Fubini--Tonelli theorem is applicable to obtain
\begin{align}
	&\int V_{\alpha(i)}\,\overline{z^\beta} \,d\mu\\
	 &=
	\int(P_{\alpha_k}^{(k)}(z_k) \overline{z_k^{\beta_k}}\, d\mu_{k}(z_k) )\\
	 &\times\int \cdots \int \prod_{\substack{{j=1}\\ j\neq k}}^n  P_{\alpha_j}^{(j)}(z_j) \overline{z_j^{\beta_j}} \, d\mu_1(z_1)\cdots d\mu_{k-1}(z_{k-1})\, d\mu_{k+1}(z_{k+1})\cdots d\mu_n(z_n) \\
	&=0.
\end{align}
This verifies (\ref{orthopol}).  
\medskip

Now let us show that
\begin{equation}\label{szeg}
S(K,w) 
\;=\; 
\exp\left(\int \log w \, d\nu_K\right)
\;=\;
\prod_{j=1}^n S(K_j, w_j).
\end{equation}
We have
\[
\log w(z_1,\dots,z_n)
\;=\;
\log\bigl(w_1(z_1)\bigr) \;+\; \cdots \;+\; \log\bigl(w_n(z_n)\bigr)
\;=\;
\sum_{j=1}^n \log w_j(z_j).
\]
Since each $\log w_j \in L^1(\mu_{K_j})$, and $|\log{w}(z_1,\ldots, z_n)|\leq \sum_{j=1}^n|\log\bigl(w_n(z_n)|$, by Fubini--Tonelli we get
\[
\int \log w \;d\nu_K 
\;=\; 
\int \sum_{j=1}^n \log w_j\, d\nu_K
\;=\; 
\sum_{j=1}^n \int \log w_j \, d\mu_{K_j},
\]
which proves (\ref{szeg}).

\medskip

Since $V_{\alpha(i)} = \prod_{j=1}^n P_{\alpha_j}^{(j)}$, another application of Fubini--Tonelli shows
\begin{align}
	\biggl(\int \lvert V_{\alpha(i)}\rvert^2 \, d\mu\biggr)^{\!\!1/2}
	=
	\biggl(\int \prod_{j=1}^n \lvert P_{\alpha_j}^{(j)}\rvert^2 \,d\mu \biggr)^{\!\!1/2}
	&=
	\prod_{j=1}^n 
	\biggl(\int \lvert P_{\alpha_j}^{(j)}\rvert^2 \,d\mu_j\biggr)^{\!\!1/2}\\
	&= \prod_{j=1}^n \|P_{\alpha_j}^{(j)}\|_{L^2(\mu_j)}.
\end{align}

\bigskip

\noindent
\subsection{Universal lower bounds and Consequences}
In view of the existence of universal lower bounds for the $L^2$ norms for the $\alpha_j>0$ case (see e.g.\cite{Alp19}, \cite{AlpZin20}), \eqref{additional lower} for the $\alpha_j=0$ case and \eqref{minus}, 
it follows that
\begin{align}
\|V_{\alpha(i)}\|_{L^2(\mu)}^2\label{prodeq}
\;=\;
\prod_{j=1}^n \bigl\|P_{\alpha_j}^{(j)}\bigr\|_{L^2(\mu_j)}^2
\;&\geq\;
\prod_{j=1}^n S(K_j,w_j) \prod_{j=1}^n [\ca(K_j)^{\alpha_j}]^2\\
\;&=\;
S(K,w)\prod_{j=1}^n [\ca(K_j)^{2\alpha_j}]\\
&\geq S(K,w) \tau^{-}(K)^{2|\alpha(i)|}.\label{sup11}
\end{align}

Hence 
\begin{equation}\label{unif l2}
	\bigl[W_{2,\alpha(i)}(K,w)\bigr]^2
	\;\ge\;
	S(K,w).
\end{equation}
This result for product sets gives a generalization of the one-dimensional result { (1.8)}.

\medskip

For a compact subset $L$ of $\bb C$, let $\Omega_L$ denote the unbounded component of $\overline{\bb C}\setminus L$. From the above discussion, we can derive the following results for the Monge-Amp\`ere measure $\nu_K$ of a product set in both $\bb C^n$ and $\bb R^n$ settings.
\begin{theorem}\label{fourthree}
	Let $K=K_1\times \cdots \times K_n$, where each $K_j$ is a regular compact subset of $\mathbb{C}$. Let $V_{\alpha(i)}\in \mathcal P(\alpha(i))$ be an orthogonal polynomial for $\nu_K$ with leading term $z^{\alpha(i)}= z_1^{\alpha_1}\cdots z_n^{\alpha_n}$. Then
	\begin{equation}\label{prod eq1}
		\|V_{\alpha(i)}\|_{L^2(\nu_K)}\geq  \prod_{j=1}^n \ca(K_j)^{\alpha_j}
	\end{equation}
	and thus 
	
	\begin{equation}\label{lower widom l2}
	\bigl[W_{2,\alpha(i)}(K,1)\bigr]^2
	\;\ge\;
	1.
	\end{equation}
	Equality in \eqref{prod eq1} holds for some $\alpha(i)=(\alpha_1,\ldots, \alpha_n)$ if and only if 
	for each $j\in\{1,\ldots, n\}$ one of the following conditions is satisfied:
	\begin{enumerate}
		\item $\alpha_j=0$, or
		\item $\partial \Omega_{K_j} = R_{\alpha_j}^{-1}(\partial \bb D) $ for some polynomial $R_{\alpha_j}$ of degree $\alpha_j$.
	\end{enumerate}
	In this case, for any $c\in \bb N$, 
	\begin{equation}\label{prod eq3}
		\|V_{c\alpha(i)}\|_{L^2(\nu_K)}= \prod_{j=1}^n \ca(K_j)^{c\alpha_j}.
	\end{equation}

\end{theorem}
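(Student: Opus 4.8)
The plan is to reduce everything to the one-dimensional theory via the factorization $V_{\alpha(i)} = \prod_j P_{\alpha_j}^{(j)}$ already established in (\ref{orthopol}), together with the multiplicativity $\|V_{\alpha(i)}\|_{L^2(\nu_K)} = \prod_j \|P_{\alpha_j}^{(j)}\|_{L^2(\mu_{K_j})}$ coming from the Fubini--Tonelli argument. Since $w\equiv 1$, we have $\mu_j = \mu_{K_j}$, so each factor is the $L^2$-norm of the monic orthogonal polynomial of degree $\alpha_j$ with respect to the equilibrium measure $\mu_{K_j}$. First I would invoke the sharp one-dimensional result (this is the $w\equiv 1$ case of the bound \eqref{wid1}, originally in \cite{Alp19}, whose optimality and equality cases are analyzed in \cite{AlpZin21}): for a non-polar compact $L\subset\bb C$ and $P_k$ the $k$-th monic orthogonal polynomial for $\mu_L$, one has $\|P_k\|_{L^2(\mu_L)}\geq \ca(L)^k$. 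Taking the product over $j$ gives \eqref{prod eq1}, and dividing by $\tau^-(K)^{|\alpha(i)|}=\prod_j\ca(K_j)^{\alpha_j}$ (using \eqref{minus} to identify $\tau^-(K)$, although here we directly get the stronger inequality before passing to the $\tau^-$ bound) yields \eqref{lower widom l2}.

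Next I would handle the equality characterization. The key point is that equality in the product bound $\prod_j\|P_{\alpha_j}^{(j)}\|_{L^2(\mu_{K_j})}\geq\prod_j\ca(K_j)^{\alpha_j}$ holds if and only if each factor is an equality, i.e. $\|P_{\alpha_j}^{(j)}\|_{L^2(\mu_{K_j})}=\ca(K_j)^{\alpha_j}$ for every $j$; this is because each factor individually dominates $\ca(K_j)^{\alpha_j}$ and all quantities are positive. So I reduce to characterizing equality in the one-dimensional bound $\|P_k\|_{L^2(\mu_L)}=\ca(L)^k$. For $k=0$ this is automatic (giving condition (1), $\alpha_j=0$). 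For $k\geq 1$, the relevant result is the classification of when the $L^2$ Widom factor for the equilibrium measure attains its minimal value $\ca(L)^k$: this happens precisely when $\partial\Omega_L = R_k^{-1}(\partial\bb D)$ for a polynomial $R_k$ of degree $k$ — i.e. the outer boundary of $L$ is a lemniscate of a degree-$k$ polynomial. I would cite the appropriate reference (this characterization, in terms of the equilibrium measure being the pullback of normalized Lebesgue measure on the circle under a degree-$k$ rational/polynomial map, is in the literature on Widom factors, e.g. \cite{AlpZin21} or related work of Alpan--Goncharov); the mechanism is that equality in Jensen/arithmetic-geometric-mean forces $|P_k|$ to be constant $\mu_L$-a.e., which forces $\mu_L$ to be supported where $|P_k|$ is constant, and the standard potential-theoretic argument then identifies the boundary as a lemniscate.

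Finally, for the scaling statement \eqref{prod eq3}: if condition (2) holds for index $j$ with polynomial $R_{\alpha_j}$, then for any $c\in\bb N$ the polynomial $R_{\alpha_j}^{c}$ has degree $c\alpha_j$ and $\partial\Omega_{K_j} = (R_{\alpha_j}^c)^{-1}(\partial\bb D)$ as well (since $|R_{\alpha_j}|=1$ iff $|R_{\alpha_j}^c|=1$), so condition (2) is inherited by $c\alpha_j$; if $\alpha_j=0$ then $c\alpha_j=0$ and condition (1) persists. Hence equality in \eqref{prod eq1} for $\alpha(i)$ propagates to equality for $c\alpha(i)$, which is \eqref{prod eq3}. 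I expect the main obstacle to be pinning down the exact one-dimensional equality characterization and its correct citation — in particular verifying that the lemniscate condition $\partial\Omega_{K_j}=R_{\alpha_j}^{-1}(\partial\bb D)$ is stated in the literature in precisely this form (outer boundary, not all of $K_j$) and that it is both necessary and sufficient; the product and scaling arguments around it are routine once that input is in hand.
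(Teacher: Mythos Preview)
Your proposal is correct and follows essentially the same approach as the paper: factor via \eqref{orthopol}, apply the one-dimensional lower bound and the equality characterization from \cite[Theorem~4.2]{AlpZin21} coordinatewise, and combine (the paper also deduces \eqref{prod eq3} by citing that same theorem at index $c\alpha_j$, which amounts to your observation that $R_{\alpha_j}^c$ witnesses the lemniscate condition). One minor slip: the displayed identity $\tau^-(K)^{|\alpha(i)|}=\prod_j\ca(K_j)^{\alpha_j}$ is false in general---by \eqref{minus} one only has $\tau^-(K)^{|\alpha(i)|}\le \prod_j\ca(K_j)^{\alpha_j}$---but your parenthetical makes clear you understand this, and it is precisely this inequality that yields \eqref{lower widom l2}.
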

\begin{proof}
	The inequality \eqref{prod eq1} was already proved in \eqref{prodeq} and \eqref{lower widom l2} is a special case of \eqref{unif l2}. It follows from the universal lower bounds and \eqref{prodeq} that equality in \eqref{prod eq1} is satisfied for some $\alpha(i)$ if and only if for each $j$ we have
	\begin{equation}\label{nonprod}
		 \bigl\|P_{\alpha_j}^{(j)}\bigr\|_{L^2(\mu_{K_j})} = \ca(K_j)^{\alpha_j}.
	\end{equation}	
	It was shown in \cite[Theorem 4.2]{AlpZin21} that for the case $\alpha_j\geq 1$, \eqref{nonprod} holds if and only if $\partial \Omega_{K_j} = R_{\alpha_j}^{-1}(\partial \bb D) $ for some polynomial of degree $\alpha_j$. For the case $\alpha_j=0$, \eqref{nonprod} is automatically satisfied. Thus equality holds in \eqref{prod eq1} for some $\alpha(i)$ if and only if one of the given conditions in the statement of the theorem is satisfied for each $j$. In this case, \eqref{prod eq3} holds, since $\|P_{c\alpha_j}\|_{L^2(\mu_{K_j})}= \ca(K_j)^{c\alpha_j}$ holds in view of \cite[Theorem 4.2]{AlpZin21}.
\end{proof}
\begin{theorem}
	Let $K=K_1\times \cdots \times K_n$ where each $K_j$, $j=1,\ldots, n$, is a regular compact subset of $\mathbb{R}$. Let $V_{\alpha(i)}\in \mathcal P(\alpha(i))$ be an orthogonal polynomial for $\nu_K$ with the leading term $z^{\alpha(i)}= z_1^{\alpha_1}\cdots z_n^{\alpha_n}$. For a given $\alpha(i)=(\alpha_1,\ldots, \alpha_n)$, let $c_j=0$ if $\alpha_j=0$, and $c_j=1$ if $\alpha_j>0$. Then
	\begin{equation}\label{prod eq11}
		\|V_{\alpha(i)}\|_{L^2(\nu_K)}\geq  \prod_{j=1}^n \left(\sqrt{2}\right)^{c_j}\prod_{j=1}^n \ca(K_j)^{\alpha_j},
	\end{equation}
	and, in particular, 
	\begin{equation}\label{prod eq12}
	[W_{2,\alpha(i)}(K,1)]^2\geq 2, \mbox{ for } \, i\geq 1.
	\end{equation}
		Equality in \eqref{prod eq11} holds for some $\alpha(i)=(\alpha_1,\ldots, \alpha_n)$ if and only if 
	for each $j\in\{1,\ldots, n\}$ one of the following  two conditions is satisfied:
	\begin{enumerate}
		\item $\alpha_j=0$, or
		\item ${K_j} = R_{\alpha_j}^{-1}([-1,1])$ for some polynomial $R_{\alpha_j}$ of degree $\alpha_j$.
	\end{enumerate}
\end{theorem}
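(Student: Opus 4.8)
The plan is to reduce the $n$-dimensional statement entirely to the one-dimensional case via the factorization $V_{\alpha(i)} = \prod_{j=1}^n P_{\alpha_j}^{(j)}$ established just above in (\ref{orthopol}), together with the product formula $\|V_{\alpha(i)}\|_{L^2(\nu_K)}^2 = \prod_{j=1}^n \|P_{\alpha_j}^{(j)}\|_{L^2(\mu_{K_j})}^2$ and the identity $\tau^-(K) = \min_j \ca(K_j)$ from (\ref{minus}). First I would recall the sharp one-dimensional lower bound for the unweighted case on a regular compact subset of $\mathbb{R}$: by \cite{Sch08} and \cite{AlpZin20} (the doubled bound (\ref{doubled l2}) for $w\equiv 1$), one has $\|P_k^{(j)}\|_{L^2(\mu_{K_j})}^2 \ge 2\,\ca(K_j)^{2k}$ for every $k\ge 1$, while for $k=0$ trivially $\|P_0^{(j)}\|_{L^2(\mu_{K_j})} = \sqrt{\mu_{K_j}(K_j)} = 1 = \ca(K_j)^0$. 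Multiplying these $n$ inequalities, one factor $\sqrt 2$ arising for each index $j$ with $\alpha_j>0$ and none for the indices with $\alpha_j=0$, yields exactly (\ref{prod eq11}); and (\ref{prod eq12}) follows since for $i\ge 1$ at least one $\alpha_j$ is positive, so at least one factor of $\sqrt 2$ appears, while $\ca(K_j)^{2\alpha_j}$ accumulates to at least $\tau^-(K)^{2|\alpha(i)|}$ by the chain of inequalities in (\ref{sup11}), giving $[W_{2,\alpha(i)}(K,1)]^2 \ge 2$.

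For the equality characterization, the argument again decouples: equality in the product inequality (\ref{prod eq11}) holds if and only if equality holds in each one-dimensional inequality, i.e. $\|P_{\alpha_j}^{(j)}\|_{L^2(\mu_{K_j})}^2 = 2\,\ca(K_j)^{2\alpha_j}$ for every $j$ with $\alpha_j>0$ (the $\alpha_j=0$ factors are automatically equalities). I would then invoke the one-dimensional equality characterization for the doubled $L^2$ Widom bound with $w\equiv 1$: this is the analogue of the characterization used in Theorem \ref{fourthree}, but for $K_j\subset\mathbb{R}$ the relevant result (from \cite{AlpZin20}, or the corresponding statement in \cite{AlpZin21}) states that $\|P_{\alpha_j}^{(j)}\|_{L^2(\mu_{K_j})}^2 = 2\,\ca(K_j)^{2\alpha_j}$ for some $\alpha_j\ge 1$ exactly when $K_j = R_{\alpha_j}^{-1}([-1,1])$ for a polynomial $R_{\alpha_j}$ of degree $\alpha_j$ (a polynomial preimage of an interval). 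Assembling these conditions index by index gives precisely the two alternatives in the statement, completing the proof.

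The main obstacle I anticipate is pinning down the correct citation and precise form of the one-dimensional equality case: one must be careful that the doubled bound $\|P_k\|_{L^2(\mu_K)}^2 \ge 2\,\ca(K)^{2k}$ for $K\subset\mathbb{R}$ with $w\equiv 1$ holds for \emph{all} $k\ge 1$ (not merely asymptotically) and that its equality locus is exactly the class of degree-$k$ polynomial preimages of $[-1,1]$ — this is the real-line counterpart of the $\partial\Omega_{K_j} = R_{\alpha_j}^{-1}(\partial\mathbb{D})$ condition appearing in Theorem \ref{fourthree}, and the two should not be conflated. A secondary point requiring a line of care is the observation that for a product set $K = K_1\times\cdots\times K_n$ with each $K_j$ a \emph{regular} compact subset of $\mathbb{R}$, the Szeg\H{o} condition is trivially satisfied with $w\equiv 1$ (since $S(K_j,1)=1$), so the factorization results of this section apply directly; and that regularity of each $K_j\subset\mathbb R$ guarantees $\mu_{K_j}$-a.e. positivity and hence that $P_{\alpha_j}^{(j)}$ is genuinely the $\alpha_j$-th monic orthogonal polynomial. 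Once these one-dimensional ingredients are cited correctly, the multivariate statement follows by a direct product argument with no further analytic input.
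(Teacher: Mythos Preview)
Your proposal is correct and follows essentially the same route as the paper: reduce to the one-dimensional doubled bound $\|P_{\alpha_j}^{(j)}\|_{L^2(\mu_{K_j})}^2 \ge 2\,\ca(K_j)^{2\alpha_j}$ for $\alpha_j\ge 1$ (the paper cites \cite[Theorem 3.1]{AlpZin20}; \cite{Sch08} is the sup-norm analogue and is not needed here), multiply via the product formula, and use $\tau^-(K)=\min_j \ca(K_j)$ for (\ref{prod eq12}). For the equality case the paper invokes \cite[Theorem 4.4]{AlpZin21}, exactly the real-line characterization $K_j = R_{\alpha_j}^{-1}([-1,1])$ you anticipated.
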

\begin{proof}
For $\alpha_j>0$, we have (see \cite[Theorem 3.1.]{AlpZin20})
	\begin{equation}\label{ooo}
		\|P_{\alpha_j}^{(j)}\|_{L^2(\mu_{K_j})}^2 \geq 2 \ca(K_j)^{2\alpha_j}. 
	\end{equation}
	Thus, \eqref{prod eq11} and \eqref{prod eq12} hold in view of \eqref{minus} and the equality part in \eqref{prodeq}.
	Equality in \eqref{ooo} holds if and only if ${K_j} = R_{\alpha_j}^{-1}([-1,1])$ for some polynomial $R_{\alpha_j}$ of degree $\alpha_j$, see \cite[Theorem 4.4]{AlpZin21}. Thus equality in \eqref{prod eq11} holds if and only if for each $\alpha_j$ one of the two conditions described in the statement of the theorem holds.
\end{proof}

\noindent
\subsection{The Sup-norm Case}

\medskip

We continue with $K$ and $w$ in this product setting in $\bb C^n$. We assume each $w_j$ is nonnegative and Borel measurable with $\sup_{{z_j}\in K_j}\,|w_j(z_j)| < \infty$. Let
\begin{equation}
	\widehat{w_j}(z_j):= \lim_{r\rightarrow 0^+} \sup_{\xi\in B_r(z_j)\cap K_j}w_j(\xi),\,\, z_j\in K_j
\end{equation}
so that each $\widehat{w_j}$ is upper semicontinuous, bounded, and nonnegative on $K$ with 
\begin{equation}
	w_j\leq \widehat{w_j}
\end{equation}
and
\begin{equation}\label{s comp}
	0<S(K_j,w_j)\leq S(K_j, \widehat{w_j}).
\end{equation}
Then it is easy to see that 
\begin{equation}
	\widehat{w}(z_1,\ldots, z_n)= \prod_{j=1}^n \widehat{w_j}(z_j).
\end{equation}

We next prove an analogue of \eqref{unif l2} for the sup-norm case, which is a generalized version of \cite[Theorem 13]{NSZ21}. 
We let
\[
d\mu_j(z_j) \;=\; \widehat{w_j}^2(z_j)\, d\mu_{K_j}(z_j),
\]
and
\[
\mu \;=\; \mu_1 \otimes \cdots \otimes \mu_n=\widehat{w}^2\,d\nu_K.
\]
Note that here the Radon-Nikodym derivatives are slightly different compared to the previous subsection.

 Since $d\nu_K$ is a unit measure, Hölder's inequality gives
\begin{equation}\label{holder comp}
\int {\left|T_{\alpha(i),\widehat{w}}^{(K)}\right|}^2 (\widehat{w})^2 d\nu_K \leq	\left\|\left(T_{\alpha(i),\widehat{w}}^{(K)}\widehat{w}\right)^2\right\|_K .
\end{equation}
Let $P_{\alpha(i)}\in \mathcal P(\alpha(i))$ be an orthogonal polynomial with leading term $z^{\alpha(i)}$ for $\mu$. Using the minimality of the $L^2(\mu)$ norm of $P_{\alpha(i)}$ together with  \eqref{holder comp} and \eqref{sup11}, we get
\begin{align}
\left\|(T_{\alpha(i),\widehat{w}}^{(K)}\widehat{w})^2\right\|_K &\geq	\int {\left|T_{\alpha(i),\widehat{w}}^{(K)}\right|}^2 (\widehat{w})^2 d\nu_K \\
&\geq 	\int {\left|P_{\alpha(i)}\right|}^2 (\widehat{w})^2 d\nu_K\\
& \geq S(K,\widehat{w}^2) \tau^{-}(K)^{2|\alpha(i)|}\\
&= \left[S(K,\widehat{w})\right]^2 \tau^{-}(K)^{2|\alpha(i)|}.
\end{align}
Thus, { generalizing (1.9), we have}
\begin{equation}\label{hat wid}
	W_{\infty,\alpha(i)}(K,\widehat{w})\geq S(K,\widehat{w}).
\end{equation}


\subsection{Further Results on Chebyshev Polynomials in the Real Case}

\medskip
We study the special case $K_j\subset \bb R$ for each $j$. We use the same notation from the previous subsection. The main result in this section is Theorem \ref{big prod} and our proof relies heavily on arguments from \cite{reimer}. 

It was proved in \cite[Corollary 6 and Theorem 7]{NSZ21} that, for $\alpha_j>0$, the $\alpha_j$-th Chebyshev polynomial $T_{\alpha_j, \widehat {w_j}}^{(K_j)}$ for $\widehat {w_j}$ on $K_j$ is unique, its zeros and coefficients are real, and there are points $x_0^{(j)}, \ldots, x_{\alpha_j}^{(j)}\in K_j$ such that
\begin{equation}
	x_0^{(j)}>x_1^{(j)}>\cdots > x_{\alpha_j}^{(j)} 
\end{equation} 
which satisfy
\begin{equation}\label{alternation}
	\widehat{w_j}(x_k^{(j)})T_{\alpha_j, \widehat {w_j}}^{(K_j)}(x_k^{(j)})= (-1)^k \|T_{\alpha_j, \widehat{w_j}}\widehat{w_j}\|_{K_j}, \,\,\, k=0,\ldots, \alpha_j.
\end{equation}
Thus we have a generalized version of Chebyshev's alternation theorem and in particular the zeros of these Chebyshev polynomials are simple. Let $L_j:=\{x_0^{(j)}, \ldots, x_{\alpha_j}^{(j)}\}$.
For the case $\alpha_j=0$, by compactness of $K_j$ and upper semicontinuity of $\widehat{w_j}$, there is a point $x_0^{(j)}\in K_j$ such that $\widehat{w_j}\left(x_0^{(j)}\right)= \|\widehat{w_j}\|_{K_j}$. Thus we let $L_j:=\{x_0^{(j)}\}$ for such a point $x_0^{(j)}$ in this case.

\medskip

Let $L = L_1 \times \cdots \times L_n$ be a cartesian product of these real nodes. Thus $L$ consists of $\prod_{j=1}^n (\alpha_j+1)$ points on $K$. Since we consider a product of subsets of the real line, we use $x^\beta=x_1^{\beta_1}\cdots x_n^{\beta_n}$ to denote monomials instead of $z^\beta$.

Let 
\begin{equation}\label{prod def}
	Q_{\alpha(i)}(x_1,\ldots,x_n)=\prod_{j=1}^n T_{\alpha_j, \widehat {w_j}}^{(K_j)}(x_j).
\end{equation}
Then $Q_{\alpha(i)}\in \mathcal P(\alpha(i))$; i.e., its leading term is $x^{\alpha(i)}=x_1^{\alpha_1}\cdots x_n^{\alpha_n}$; and it has real coefficients. In view of \eqref{prod def} and \eqref{alternation},
for any $x=(x_{k_1}^{(1)},\ldots, x_{k_n}^{(n)})\in L$, we have
\begin{equation}
Q_{\alpha(i)}(x) \widehat{w}(x)= (-1)^{k_1+\ldots+k_n} \|Q_\alpha \widehat{w}\|_K.
\end{equation}
The goal of this section is to prove that $Q_{\alpha(i)}$ is a weighted Chebyshev polynomial for $\widehat{w}$ on $K$. 

Let 
\[
\bb P_{\alpha(i)}^{(n)} \;:=\; \mathrm{span}\bigl\{\,x^\beta : |\beta|\le |\alpha(i)| \bigr\},
\]
where the coefficients in the linear combinations are taken to be real. Let $\widetilde{\bb P_{\alpha(i)}^{(n)}}$ be the subspace of $\bb P_{\alpha(i)}^{(n)}$ where the coefficient of $x^{\alpha(i)}$ is $0$. { Note that if $p,q\in \mathcal P(\alpha(i))$ then $p-q\in \widetilde{\bb P_{\alpha(i)}^{(n)}}$. }Also, note that $T_{\alpha(i), \widehat{w}}^{(K)}$ may be assumed to have real coefficients, since 
\[
\left\|\mathrm{Re}\,T_{\alpha(i), \widehat{w}}^{(K)} \widehat{w} \right\|_{K}
\;\le\;
\left\|T_{\alpha(i), \widehat{w}}^{(K)} \widehat{w} \right\|_{K}.
\]
Thus we only consider real-coefficient polynomials in this subsection.

\medskip

\noindent
\begin{lemma}\label{extre}
	If a real polynomial $R_{\alpha(i)}\in \mathcal P(\alpha(i))$, i.e., with leading term $x^{\alpha(i)}$, satisfies:
	\begin{itemize}
	\item[(i)] $|R_{\alpha(i)}(\xi) \widehat{w}(\xi)| = \|R_{\alpha(i)}\widehat{w}\|_K$ for all $\xi \in L$; and 
	\item[(ii)] there are $d(\xi)>0$, $\xi\in L$ such that $\displaystyle \sum_{\xi\in L}d(\xi) R_{\alpha(i)}(\xi) G(\xi) = 0$  for all $G\in \widetilde{\bb P_{\alpha(i)}^{(n)}}$,
	\end{itemize}
	then $R_{\alpha(i)}$ is a Chebyshev polynomial 
	for $\widehat{w}$ on $K$.
\end{lemma}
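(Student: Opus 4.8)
The plan is to run the classical Chebyshev sign/quadrature argument, following \cite{reimer}. Assume toward a contradiction that $R:=R_{\alpha(i)}$ is \emph{not} a weighted Chebyshev polynomial for $\widehat w$ on $K$; write $\tau:=\|R\,\widehat w\|_K$. Since $R$ is a nonzero polynomial, its zero set is pluripolar and $\{\widehat w>0\}$ is non-pluripolar, so $R\,\widehat w\not\equiv 0$ on $K$ and $\tau>0$; in particular hypothesis (i) forces $\widehat w(\xi)>0$ at every node $\xi\in L$. By non-optimality of $R$ there is a monic $P\in\mathcal P(\alpha(i))$ — for instance $P=T_{\alpha(i),\widehat w}^{(K)}$, which may be taken real — with $\|P\,\widehat w\|_K<\tau$. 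Set $G:=R-P$; since $R,P\in\mathcal P(\alpha(i))$, the remark preceding the lemma gives $G\in\widetilde{\bb P_{\alpha(i)}^{(n)}}$, so $G$ is an admissible test function in hypothesis (ii).

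The crux is the pointwise positivity
\[
R(\xi)\,G(\xi)\,\widehat w(\xi)^2
=\bigl(R(\xi)\widehat w(\xi)\bigr)^2-\bigl(R(\xi)\widehat w(\xi)\bigr)\bigl(P(\xi)\widehat w(\xi)\bigr)>0,
\qquad \xi\in L.
\]
Indeed, by (i) the first term equals $\tau^2$, while $|R(\xi)\widehat w(\xi)|\,|P(\xi)\widehat w(\xi)|\le\tau\,\|P\widehat w\|_K<\tau^2$ because $\xi\in K$; hence the displayed quantity is at least $\tau^2-\tau\,\|P\widehat w\|_K>0$.

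Finally I would insert $G$ into hypothesis (ii) and rewrite, using $\widehat w(\xi)^{2}\widehat w(\xi)^{-2}=1$,
\[
0=\sum_{\xi\in L}d(\xi)\,R(\xi)\,G(\xi)
=\sum_{\xi\in L}\frac{d(\xi)}{\widehat w(\xi)^{2}}\,\Bigl(R(\xi)\,G(\xi)\,\widehat w(\xi)^2\Bigr).
\]
Every weight $d(\xi)/\widehat w(\xi)^{2}$ is strictly positive (as $d(\xi)>0$ and $\widehat w(\xi)>0$ on $L$) and every bracketed factor is strictly positive by the previous step, so the right-hand side is strictly positive — contradicting the vanishing on the left. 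Hence $R_{\alpha(i)}$ is a weighted Chebyshev polynomial for $\widehat w$ on $K$. (The case $i=0$ is trivial since $\mathcal P(\alpha(0))=\{1\}$.)

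The main obstacle — really the only place where care is needed — is the positivity bookkeeping: combining the equioscillation in (i) with the \emph{strict} sub-optimality of $R$ to upgrade the node values of $R\,G\,\widehat w^{2}$ to a strict positive sign, and checking that $\widehat w>0$ on $L$ so that dividing by $\widehat w^{2}$ is legitimate and keeps the quadrature weights positive. Once that is in place, the contradiction with (ii) is immediate.
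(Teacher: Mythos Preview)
Your proof is correct and follows essentially the same sign/quadrature argument as the paper: set $G=R-P$ with $P$ a strictly better competitor, use (i) together with $\|P\widehat w\|_K<\tau$ to force $R(\xi)G(\xi)>0$ at every node, and then contradict (ii). The only cosmetic difference is that the paper argues contrapositively (from (ii) extract a node $\xi_0$ with $R(\xi_0)G(\xi_0)\le 0$, then use (i) to get $|P(\xi_0)\widehat w(\xi_0)|\ge \|R\widehat w\|_K$), whereas you establish strict positivity at all nodes first and then invoke (ii); your explicit insertion of $\widehat w(\xi)^2$ and verification that $\widehat w(\xi)>0$ on $L$ is in fact slightly more careful than the paper's write-up.
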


\medskip

\noindent
\begin{proof}
	Let $R_{\alpha(i)}\in \mathcal P(\alpha(i))$ be such a polynomial. Assume there is a $P\in \mathcal P(\alpha(i))$ such that
\begin{equation}\label{not imp}
	\|P\widehat{w}\|_{K}<	\|R_{\alpha(i)} \widehat{w}\|_{K}.
\end{equation}
We write $P = R_{\alpha(i)}-U$ where $U\in \widetilde{\bb P_{\alpha(i)}^{(n)}}$.
	In view of (ii), there is a $\xi_0 \in L$ such that $R_{\alpha(i)}(\xi_0) (R_{\alpha(i)}(\xi_0)-P(\xi_0) )\leq 0$. This implies that $|P(\xi_0)|\geq |R_{\alpha(i)}(\xi_0)|$. This contradicts \eqref{not imp}, since by $(i)$ we have $|R_{\alpha(i)}(\xi_0) \widehat{w}(\xi_0)| = \|R_{\alpha(i)}\widehat{w}\|_K$. This proves that such a polynomial $R_{\alpha(i)}$ is a weighted Chebyshev polynomial for $\widehat{w}$ on $K$.
	
	\medskip

\end{proof}

\begin{theorem}\label{big prod}
	Let $K = K_1 \times \cdots \times K_n$, where each $K_j \subset \mathbb{R}$ is non-polar, and let 
	\[
	w(x_1,\ldots, x_n) = w_1(x_1)\,\dots\,w_n(x_n)
	\]
	be such that each $w_j$ is bounded and Borel measurable with $S(K_j, w_j)>0$. Then
\begin{equation}
	Q_{\alpha(i)}(x_1,\ldots, x_n):= \prod_{j=1}^n T_{\alpha_j,\widehat{w_j}}^{(K_j)}(x_j)
\end{equation}
	is a Chebyshev polynomial for $\widehat{w}$ on $K$ with leading term 
	\(
	x_1^{\alpha_1}\cdots x_n^{\alpha_n}.
	\)
\end{theorem}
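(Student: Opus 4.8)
The plan is to verify the two hypotheses of Lemma~\ref{extre} for the polynomial $R_{\alpha(i)}=Q_{\alpha(i)}$ and the node set $L=L_1\times\cdots\times L_n$, and then invoke the lemma. Condition (i) is essentially immediate: by \eqref{prod def} and the alternation identity \eqref{alternation} (together with the $\alpha_j=0$ case where $x_0^{(j)}$ is a point where $\widehat{w_j}$ attains its max), for each node $\xi=(x_{k_1}^{(1)},\ldots,x_{k_n}^{(n)})\in L$ one has $|Q_{\alpha(i)}(\xi)\widehat{w}(\xi)|=\prod_{j=1}^n\|T_{\alpha_j,\widehat{w_j}}^{(K_j)}\widehat{w_j}\|_{K_j}$, and one checks this product equals $\|Q_{\alpha(i)}\widehat{w}\|_K$; the reverse inequality $\|Q_{\alpha(i)}\widehat{w}\|_K\le\prod_j\|T_{\alpha_j,\widehat{w_j}}^{(K_j)}\widehat{w_j}\|_{K_j}$ is clear from the product structure of $\widehat{w}$ and $Q_{\alpha(i)}$, so equality holds and (i) follows.

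The substance is condition (ii): I need strictly positive weights $d(\xi)$, $\xi\in L$, with $\sum_{\xi\in L}d(\xi)Q_{\alpha(i)}(\xi)G(\xi)=0$ for every $G\in\widetilde{\bb P_{\alpha(i)}^{(n)}}$, i.e., for every real polynomial of total degree $\le|\alpha(i)|$ whose $x^{\alpha(i)}$-coefficient vanishes. This is where the arguments of \cite{reimer} enter. In one variable the classical fact is that for the $\alpha_j$ alternation nodes of a Chebyshev-type polynomial there is a positive quadrature formula (a discrete ``Gauss--Chebyshev'' rule) of the form $\sum_k d_k^{(j)}f(x_k^{(j)})$ that is exact on polynomials of degree $\le\alpha_j$ when paired against $\mathrm{sgn}$ of the alternation, or more precisely that $\sum_k d_k^{(j)}T_{\alpha_j,\widehat{w_j}}^{(K_j)}(x_k^{(j)})g(x_k^{(j)})=0$ for all $g$ of degree $\le\alpha_j-1$; the weights $d_k^{(j)}$ are positive. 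The plan is to build $d(\xi)=\prod_{j=1}^n d_{k_j}^{(j)}$ as a product of these one-dimensional positive weights, so positivity is automatic, and then show the product rule annihilates $Q_{\alpha(i)}\cdot G$ for $G\in\widetilde{\bb P_{\alpha(i)}^{(n)}}$.

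The key combinatorial point, and the main obstacle, is the following: a monomial $x^\beta$ with $|\beta|\le|\alpha(i)|$ and $\beta\ne\alpha(i)$ must have $\beta_j<\alpha_j$ for at least one index $j$ — this is exactly the observation recorded after \eqref{orthopol} that $\beta\prec\alpha$ forces some $\beta_k<\alpha_k$, and here it holds more simply because $|\beta|\le|\alpha|$ with $\beta\neq\alpha$ already forces some $\beta_j<\alpha_j$. For such a monomial, apply the product quadrature: the sum factors as $\prod_{j}\big(\sum_{k}d_k^{(j)}T_{\alpha_j,\widehat{w_j}}^{(K_j)}(x_k^{(j)})(x_k^{(j)})^{\beta_j}\big)$, and the factor corresponding to the index $j$ with $\beta_j<\alpha_j$, i.e. $\beta_j\le\alpha_j-1$, vanishes by the one-dimensional exactness/orthogonality property of the alternation-node quadrature. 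Hence each such monomial is annihilated, and by linearity so is every $G\in\widetilde{\bb P_{\alpha(i)}^{(n)}}$. I would take care to state precisely the one-dimensional quadrature lemma I am quoting from \cite{reimer} (existence of positive $d_k^{(j)}$ with $\sum_k d_k^{(j)}p(x_k^{(j)})=\int p\,d\rho_j$ for $\deg p\le\alpha_j$ for a suitable measure, or the direct statement $\sum_k d_k^{(j)} T_{\alpha_j,\widehat w_j}^{(K_j)}(x_k^{(j)})(x_k^{(j)})^m=0$ for $0\le m\le\alpha_j-1$), handle the $\alpha_j=0$ slots trivially (the single node, weight $1$, and there is nothing to annihilate since $\beta_j$ can only be $0$ there), and then close by citing Lemma~\ref{extre}. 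The only real work is extracting and correctly attributing the one-variable quadrature statement; the multivariate step is then just Fubini-type factorization of a finite sum plus the elementary index observation.
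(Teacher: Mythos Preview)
Your plan is correct and is essentially the paper's proof in different language. The paper verifies conditions (i) and (ii) of Lemma~\ref{extre} exactly as you propose; for (ii) it builds the functional via the \emph{multivariate divided difference} $[F;L]=\sum_{\xi\in L}c(\xi)F(\xi)$ with $c(\xi)=\prod_j 1/v_j'(x_{k_j}^{(j)})$, observes that $v_j'$ and $T_{\alpha_j,\widehat{w_j}}^{(K_j)}$ have the same sign at each node (so $c(\xi)Q_{\alpha(i)}(\xi)>0$), and then sets $d(\xi)=|c(\xi)|\widehat w(\xi)/\|Q_{\alpha(i)}\widehat w\|_K$, giving $[F;L]=\sum_\xi d(\xi)Q_{\alpha(i)}(\xi)F(\xi)$ and hence (ii) because the divided difference annihilates every $F\in\widetilde{\bb P_{\alpha(i)}^{(n)}}$. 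Your ``product quadrature'' weights $d_k^{(j)}$ are (up to a positive normalization) exactly $\widehat{w_j}(x_k^{(j)})/|v_j'(x_k^{(j)})|$, and your factorization argument for monomials $x^\beta$ is precisely the reason $[x^\beta;L]=0$ when some $\beta_j<\alpha_j$. The only gap to close in your write-up is the one you flag yourself: rather than citing a quadrature lemma, simply exhibit $d_k^{(j)}$ via the divided-difference weights $1/v_j'(x_k^{(j)})$ and the elementary sign computation $\mathrm{sgn}\,v_j'(x_k^{(j)})=(-1)^k$, which matches $\mathrm{sgn}\,T_{\alpha_j,\widehat{w_j}}^{(K_j)}(x_k^{(j)})$ from \eqref{alternation}.
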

\begin{proof}
For any real-valued function $f_j$ defined on $L_j $, 
the \emph{divided difference} of $f_j$ on nodes 
\[
x_0^{(j)} , x_1^{(j)} , \cdots , x_{\alpha_j}^{(j)}
\]
with $\alpha_j\not = 0$ is given by
\begin{equation}\label{divid 1}
	f_j[x_0^{(j)},\,\dots,\,x_{\alpha_j}^{(j)}]
	\;=\;
	\sum_{i=0}^{\alpha_j}
	\; \frac{f_j\bigl(x_i^{(j)}\bigr)}{v_j^\prime\bigl(x_i^{(j)}\bigr)},
\end{equation}
where $v_j\left(x^{(j)}\right)=\prod_{i=0}^{\alpha_j} \left(x^{(j)} - x_{i}^{(j)}\right)$. If $\alpha_j=0$, then we set
\begin{equation}\label{divid 2}
v_j\left(x^{(j)}\right)=\left(x^{(j)}-x_0^{(j)}\right)\,\,\, \mbox{ and call }	f_j\left[x_0^{(j)}\right]= f_j\left(x_0^{(j)}\right)\ \
\end{equation}
the divided difference on $L_j$ of $f_j$. When $f_j$ is a polynomial of degree at most $\alpha_j$, 
the coefficient of $x_j^{\alpha_j}$ in $f_j$ is given by this divided difference.

Let $F \in \bb P_{\alpha(i)}^{(n)}$. Following Reimer \cite{reimer}, we define the \emph{multivariate} divided difference $[F; L]$ of $F$ on $L$ by applying 
the univariate divided-difference operator for each $j=1,\dots,n$ on the appropriate $x_j$-nodes belonging to $L_j$.  
One checks that for a pure monomial $x^\beta$, the order of application to different coordinates does not matter, 
so that $[x^\beta; L]$ is independent of the sequence in which the differences are taken.  
Hence, 
\[
\left[x^{\alpha(i)}; L\right] \;=\; 1, \ \hbox{and} \quad
[P; L] \;=\; 0 \;\text{if } P\in \widetilde{\bb P_{\alpha(i)}^{(n)}}.
\]
For $\xi=\left(x_{k_1}^{(1)},\ldots, x_{k_n}^{(n)}\right)\in L$, let
\begin{equation}\label{c def}
	c(\xi)=\prod_{j=1}^n \frac{1}{v_j^\prime\left(x_{k_j}^{(j)}\right)}.
\end{equation} 
Using linearity of divided difference operators, in view of \eqref{divid 1}, \eqref{divid 2} and \eqref{c def}, we obtain
\begin{equation}\label{f divid}
	[F;L]=\sum_{\xi\in L} c(\xi) F(\xi).
\end{equation}
We see that $v_j^\prime$ and $T_{\alpha_j,\widehat{w_j}}^{(K_j)}$ have the same sign at all points in $L_j$. In view of \eqref{divid 1},
 \eqref{divid 2} and \eqref{f divid}, this leads to
\begin{equation}
	[Q_{\alpha(i)};L]= \sum_{\xi\in L} c(\xi) Q_{\alpha(i)}(\xi)
\end{equation}
where
\begin{equation}
	c(\xi) Q_{\alpha(i)}(\xi)>0 \,\,\, \mbox{for all} \,\,\, \xi\in L.
\end{equation}
Note that 
\begin{equation}
	| Q_{\alpha(i)}(\xi)\widehat{w}(\xi)|=\|Q_{\alpha(i)} \widehat{w}\|_K \mbox{  for all} \,\,\, \xi\in L,
\end{equation}
thus condition $(i)$ in Lemma \ref{extre} holds for $R_{\alpha(i)}=Q_{\alpha(i)}$.

For each $\xi \in L$, since $\widehat{w}(\xi)>0$,
\begin{equation}\label{c eq}
	c(\xi)=\frac{|c(\xi)|Q_{\alpha(i)}(\xi) \widehat{w}(\xi)}{\|Q_{\alpha(i)} \widehat{w}\|_K}.
\end{equation}
Let
\begin{equation}\label{d eq}
	d(\xi)= \frac{|c(\xi)| \widehat{w}(\xi)}{\|Q_{\alpha(i)} \widehat{w}\|_K},
\end{equation}
so that $d(\xi)>0$ on $L$. Combining, \eqref{f divid}, \eqref{c eq} and \eqref{d eq}, we get
\begin{equation}
	[F;L]= \sum_{\xi\in L} d(\xi) Q_{\alpha(i)}(\xi) F(\xi).
\end{equation}
Since $[F;L]=0$ for $F\in \widetilde{\bb P_{\alpha(i)}^{(n)}}$, this implies that condition $(ii)$ in Lemma \ref{extre} is also satisfied by $R_{\alpha(i)}=Q_{\alpha(i)}$. Therefore, $Q_{\alpha(i)}$ is a Chebyshev polynomial with leading term $x^{\alpha(i)}=x_1^{\alpha_1}\cdots x_n^{\alpha_n}$ with respect to $\widehat{w}$ on $K$.
\end{proof}
We have the following corollary.
\begin{theorem}\label{foursix}
	Let $K=K_1\times \cdots \times K_n$, where each $K_j$, $j=1,\ldots, n$, is a non-polar compact subset of $\mathbb{R}$. For given $\alpha(i)=(\alpha_1,\ldots, \alpha_n)$, let $c_j=0$ if $\alpha_j=0$, and $c_j=1$ if $\alpha_j>0$. Then
	\begin{equation}\label{prod eq13}
		\left\|T_{\alpha(i), 1}^{(K)}\right\|_{K}\geq  \prod_{j=1}^n 2^{c_j}\prod_{j=1}^n \ca(K_j)^{\alpha_j}
	\end{equation}
	and in particular,
	\begin{equation}\label{prod eq14}
	W_{\infty,\alpha(i)}(K,1)\geq 2, \mbox{ for } \, i\geq 1.
	\end{equation}
	Equality in \eqref{prod eq13} holds for some $\alpha(i)=(\alpha_1,\ldots, \alpha_n)$ if and only if 
	for each $j\in\{1,\ldots, n\}$ one of the following two conditions is satisfied:
	\begin{enumerate}
		\item $\alpha_j=0$, or
		\item ${K_j} = R_{\alpha_j}^{-1}([-1,1])$ for some polynomial $R_{\alpha_j}$ of degree $\alpha_j$.
	\end{enumerate}
\end{theorem}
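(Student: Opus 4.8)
The plan is to obtain Theorem \ref{foursix} as a corollary of Theorem \ref{big prod}, taken with $w\equiv 1$ so that each $\widehat{w_j}\equiv 1$, together with two facts already available: the one-dimensional Schiefermayr inequality $\bigl\|T_{k,1}^{(L)}\bigr\|_L\ge 2\,\ca(L)^k$, valid for every non-polar compact $L\subset\mathbb{R}$ and every $k\ge 1$ (see \cite{Sch08}, \cite{AlpZin20}), and the identity $\tau^-(K)=\min_j\ca(K_j)$ from \eqref{minus}.

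First I would apply Theorem \ref{big prod} with $w\equiv 1$ to conclude that $Q_{\alpha(i)}(x)=\prod_{j=1}^n T_{\alpha_j,1}^{(K_j)}(x_j)$ is a Chebyshev polynomial for the constant weight on $K$, so $\bigl\|T_{\alpha(i),1}^{(K)}\bigr\|_K=\|Q_{\alpha(i)}\|_K$. Since $K=K_1\times\cdots\times K_n$ is a product and the factors $\bigl|T_{\alpha_j,1}^{(K_j)}(x_j)\bigr|$ depend on disjoint groups of variables, the supremum factors as
\[
\|Q_{\alpha(i)}\|_K=\prod_{j=1}^n\bigl\|T_{\alpha_j,1}^{(K_j)}\bigr\|_{K_j}.
\]
For each $j$ with $\alpha_j=0$ one has $T_{0,1}^{(K_j)}\equiv 1$, so the $j$-th factor equals $1=2^{c_j}\ca(K_j)^{\alpha_j}$; for each $j$ with $\alpha_j>0$, Schiefermayr's inequality gives $\bigl\|T_{\alpha_j,1}^{(K_j)}\bigr\|_{K_j}\ge 2\,\ca(K_j)^{\alpha_j}=2^{c_j}\ca(K_j)^{\alpha_j}$. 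Multiplying over $j$ yields \eqref{prod eq13}. For \eqref{prod eq14} I would note that when $i\ge 1$ at least one $\alpha_j$ is positive, hence $\prod_j 2^{c_j}\ge 2$, while $[\tau^-(K)]^{|\alpha(i)|}=\prod_j\bigl[\min_k\ca(K_k)\bigr]^{\alpha_j}\le\prod_j\ca(K_j)^{\alpha_j}$; dividing \eqref{prod eq13} by $[\tau^-(K)]^{|\alpha(i)|}$ then gives $W_{\infty,\alpha(i)}(K,1)\ge 2$.

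For the equality assertion, the point is that \eqref{prod eq13} was produced by multiplying, over $j$, factorwise inequalities in which the left-hand factor dominates the right-hand factor and all quantities are positive, so equality in \eqref{prod eq13} holds if and only if it holds in every factor. For $\alpha_j=0$ the factorwise inequality is automatically an equality; for $\alpha_j>0$, equality $\bigl\|T_{\alpha_j,1}^{(K_j)}\bigr\|_{K_j}=2\,\ca(K_j)^{\alpha_j}$ holds, by the equality case of Schiefermayr's inequality (cf. \cite{Sch08}, \cite{AlpZin21}), exactly when $K_j=R_{\alpha_j}^{-1}([-1,1])$ for a polynomial $R_{\alpha_j}$ of degree $\alpha_j$, which is the stated characterization.

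This argument is essentially bookkeeping once Theorem \ref{big prod} is available; the only steps needing a little care are the sup-norm factorization over the product set and the mixed case in which some $\alpha_j$ vanish while others are positive. Possible non-uniqueness of the Chebyshev polynomial on $K$ is harmless, since \eqref{prod eq13} concerns only the extremal value $\bigl\|T_{\alpha(i),1}^{(K)}\bigr\|_K$, which is unambiguous. I do not anticipate a substantive obstacle beyond citing the one-dimensional equality characterization in the sup-norm rather than the $L^2$ setting.
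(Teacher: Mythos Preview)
Your proposal is correct and follows essentially the same route as the paper: apply Theorem \ref{big prod} with $w\equiv 1$ to factor the Chebyshev norm over the product, invoke Schiefermayr's inequality \cite{Sch08} on each nontrivial factor, and use \eqref{minus} for \eqref{prod eq14}. The only minor discrepancy is bibliographic: for the equality characterization in the sup-norm case the paper cites \cite[Theorem 1.1]{CSZ3} (see also \cite{Tot11}) rather than \cite{Sch08} or \cite{AlpZin21}, the latter of which treats the $L_p$ setting; you already flagged this as the one point needing the correct citation.
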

\begin{proof} By Theorem \ref{big prod}, for each $i$, we have
$T_{\alpha(i), 1}^{(K)}(x_1,\ldots,x_n)=\prod_{j=1}^n T_{\alpha_j,1}^{(K_j)}(x_j)$. For $\alpha_j>0$, it follows from \cite{Sch08} that
\begin{equation}\label{prod eq15}
	\left\|T_{\alpha_j, 1}^{(K_j)}\right\|_{K_j}\geq 2 \ca(K_j)^{\alpha_j}.
\end{equation}
For $\alpha_j=0$,
\begin{equation}\label{prod eq16}
	\left\|T_{\alpha_j, 1}^{(K_j)}\right\|_{K_j}= \ca(K_j)^{\alpha_j}
\end{equation}
holds trivially.
Thus, \eqref{prod eq13} holds and \eqref{prod eq14} also holds in view of \eqref{minus}. It was shown in \cite[Theorem 1.1]{CSZ3} (see also \cite{Tot11}) that for $\alpha_j>0$, the equality is satisfied in \eqref{prod eq15} if and only if ${K_j} = R_{\alpha_j}^{-1}([-1,1])$ for some polynomial $R_{\alpha_j}$ of degree $\alpha_j$. Thus equality in \eqref{prod eq13} holds if and only if for each $j$, (1) or (2) is satisfied.
\end{proof}

\section{Generalization of Mahler's theorem}

In this section we will generalize a result of Mahler \cite{mahler} to very general compact sets in $\bb C$ and product sets in $\bb C^n$ for $n>1$. 
For $n\geq 1$, let $K$ be a non-pluripolar compact subset of $\bb{C}^n$ and $P$ be a polynomial. We define the Mahler measure of $P$ relative to $K$ as 
$$ M(P):= \exp \left[\int \log{|P|} \, d\nu_K\right]$$
where we recall that $\nu_K$ is the Monge-Amp\`ere measure of $K$. 
Note if $n=1$ and $\ca(K)=1$ and $K$ is regular, this agrees with the notion given by Pritsker in \cite[p. 61]{igor}. 
Since we will be working with product sets $K=K_1\times \cdots \times K_n$, to clearly distinguish between univariate equilibrium measures and Monge-Amp\`ere measures in $\bb C^n$ for $n>1$, for a non-polar compact subset $E$ of $\bb C$, we let $\mu_E$ denote the equilibrium measure of $E$.

We begin with a univariate result. 
\begin{theorem}
	Let $K$ be a non-polar compact subset of $\bb{C}$ and $$P(z)= a_d z^d+\ldots + a_k z^k+\ldots +a_0.$$
	Then for $k=0,...,d$,
	\begin{equation}\label{lowera}
		|a_k|\leq \frac{{d \choose k} M(P) (\max_{z\in K} |z|)^{d-k}}{\ca(K)^d}.
	\end{equation}
\end{theorem}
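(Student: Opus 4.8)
The plan is to realize the coefficient $a_k$ as a contour-type quantity extracted from $P$ and control it via the Mahler measure $M(P)$ together with the known lower bound $\|T^{(K)}_{d,1}\|_K \ge \ca(K)^d$ from Theorem~\ref{tau} (specialized to $n=1$, where $\tau^-(K)=\ca(K)$ by \cite[Cor. 5.5.4]{Ran95}). The natural device is a conformal change of variable. Let $\phi$ be the conformal map from $\overline{\bb C}\setminus K$ (or rather from $\{|w|>1\}$) onto the unbounded component of $\overline{\bb C}\setminus K$ with $\phi(\infty)=\infty$ and $\phi'(\infty)=1/\ca(K)$; equivalently work with its inverse $\Phi$, the exterior Riemann map, for which $\Phi(z)\sim z/\ca(K)$ near infinity and $\log|\Phi(z)|=g_K(z)$ is the Green function. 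On the unit circle parametrized by $w=e^{i\theta}$, the push-forward of arclength is (for regular $K$, and by approximation in general) the equilibrium measure $\mu_K$; this is the standard identification that makes $\int\log|P|\,d\mu_K = \int_0^{2\pi}\log|P(\phi(e^{i\theta}))|\,\frac{d\theta}{2\pi}$.

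First I would set up the auxiliary univariate polynomial-like function $F(w):=P(\phi(w))\,\ca(K)^d\,w^{-d}$ and analyze it on $\{|w|\ge 1\}$. Since $\phi(w)=\ca(K)^{-1}w + O(1)$ as $w\to\infty$, the function $P(\phi(w))$ behaves like $a_d\,\ca(K)^{-d}w^d$ at infinity, so $F$ extends holomorphically across $w=\infty$ with $F(\infty)=a_d$, and $F$ is holomorphic and bounded on $\{|w|>1\}$ with boundary values on $\{|w|=1\}$. Then $\log|F|$ is harmonic (minus a possible mild singularity handled by regularization), and its boundary integral equals its value at $\infty$ only in the $\log$ sense; more usefully, $\log M(P) = \int_0^{2\pi}\log|P(\phi(e^{i\theta}))|\,\frac{d\theta}{2\pi} = \log\ca(K)^d\cdot 0 + \int \log|F(e^{i\theta})|\frac{d\theta}{2\pi} + $ (a $\log$-term from $|w|^{-d}$ which integrates to $0$ on $|w|=1$). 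Hence $M(P)$ equals the geometric mean of $|P\circ\phi|$ on the circle, and by the arithmetic–geometric mean inequality this is at most $\|P\|_K$. But I need the other direction: a \emph{lower} bound on $M(P)$, which comes precisely from applying Theorem~\ref{tau} to a suitable polynomial built from $P$.

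The key step is therefore the following. Fix $k$, and consider the polynomial $Q(z):=\big(P(z) - \sum_{j=0}^{k-1} a_j z^j\big)/z^k = a_d z^{d-k} + \cdots + a_k$, a polynomial of degree $d-k$ with leading coefficient $a_d$ and constant term $a_k$. Dividing by $a_d$ gives a monic polynomial of degree $d-k$; by Theorem~\ref{tau} (with $n=1$), $\|Q/a_d\|_K \ge \ca(K)^{d-k}$, so $\|Q\|_K \ge |a_d|\,\ca(K)^{d-k}$. This is not quite it either — I want to isolate $a_k$, not $a_d$. Instead I would look at the \emph{reversed} construction: near infinity in the $w$-variable, the coefficient $a_k$ of $P$ appears as a specific Laurent coefficient of $P(\phi(w))$, and by Cauchy's estimate on $|w|=1$, $|a_k|$ is bounded by a combination of $\sup_{|w|=1}|P(\phi(w))| = \|P\|_K$ with binomial weights coming from expanding $\phi(w)^j$. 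More precisely, writing $P(z)=\sum_{m} a_m z^m$ and $z = \phi(w)$, the contribution to the $w^k$-Laurent coefficient from the top term $a_d z^d$ carries a factor $\binom{d}{k}\ca(K)^{-d}$ (from the binomial expansion of $(\ca(K)^{-1}w + \text{lower})^d$), the $(\max_{z\in K}|z|)^{d-k}$ absorbs the lower-order terms of $\phi$, and $M(P)$ replaces $\|P\|_K$ via the stronger Jensen-type inequality for subharmonic $\log|P|$ rather than the crude sup bound. The main obstacle will be making this Laurent-coefficient extraction rigorous and getting exactly the constant $\binom{d}{k}$ and exactly the power $(\max_{z\in K}|z|)^{d-k}$ — in particular, upgrading the sup-norm estimate $\|P\|_K$ to the Mahler measure $M(P)$ requires a careful use of the identity $M(P) = \lim \|P\|_{L^2(\mu_K^{\otimes})}$-type comparison, or more directly the fact that for the polynomial $\tilde P(z) = z^d \overline{P(1/\bar z)}$-type reflection one controls coefficients by $M$; alternatively, one applies the one-variable case of Mahler's original argument \cite{mahler} on the disk to $P\circ\phi$ and then transfers the estimate back through $\phi$, at which stage the factor $\ca(K)^{-d}$ and the $(\max|z|)^{d-k}$ enter from the asymptotics and the boundedness of $\phi$ respectively. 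I expect the cleanest route is exactly this: reduce to Mahler's polydisk inequality \eqref{mah1} (with $n=1$) applied to $P\circ\phi$, whose Mahler measure over $\partial\bb D$ is $M(P)$, and then convert the resulting coefficient bound back to the coefficients $a_k$ of $P$ using $\phi(w) = \ca(K)^{-1}w + c_0 + c_1 w^{-1} + \cdots$ together with crude bounds $|c_j| \lesssim \max_{z\in K}|z|$ obtained from the Schwarz lemma applied to $\ca(K)\phi(w) - w$ or from the maximum principle; the bookkeeping of binomial coefficients is the only genuinely laborious part, and it is routine.
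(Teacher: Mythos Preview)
Your conformal-map route has genuine gaps that make it, as written, unworkable for the full statement.

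First, the exterior Riemann map $\phi:\{|w|>1\}\to\Omega_K$ that you rely on exists only when the unbounded component $\Omega_K$ of $\overline{\bb C}\setminus K$ is simply connected, i.e., when the polynomial hull of $K$ is connected. The theorem is stated for an arbitrary non-polar compact set, which may perfectly well be disconnected (e.g., a union of two disjoint disks); in that case there is no such $\phi$, and your identification $\int\log|P|\,d\mu_K=\int_0^{2\pi}\log|P(\phi(e^{i\theta}))|\,\frac{d\theta}{2\pi}$ collapses. Second, even when $\phi$ does exist, $P\circ\phi$ is \emph{not} a polynomial in $w$; it is merely holomorphic on $\{|w|>1\}$ with a pole of order $d$ at $\infty$. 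Mahler's inequality \eqref{mah1} is a statement about \emph{polynomial} coefficients, so ``applying Mahler's polydisk inequality (with $n=1$) to $P\circ\phi$'' is not a step you can take. What you would actually need is to recover the $a_k$ from the Laurent coefficients of $P\circ\phi$ at $\infty$, and your claim that this bookkeeping is ``routine'' and yields exactly $\binom{d}{k}(\max_{z\in K}|z|)^{d-k}\ca(K)^{-d}$ is not justified: the expansion of $\phi(w)^m$ involves all of the Laurent coefficients $c_0,c_1,\dots$ of $\phi$, and the Schwarz-lemma bound you suggest for these does not give the precise constants.

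The paper's proof is completely different and avoids all of these issues. It factors $P(z)=a_d\prod_{j=1}^d(z-c_j)$, expresses $M(P)=|a_d|\exp\bigl[\sum_j U^{\mu_K}(c_j)\bigr]$ via the logarithmic potential $U^{\mu_K}$, and uses Vieta's formulas to write $|a_k|\le\binom{d}{k}\max|a_d\,c_{j_1}\cdots c_{j_{d-k}}|$. The key input is then a pointwise bound $|z|\le (\max_{w\in K}|w|)\,\ca(K)^{-1}\,e^{U^{\mu_K}(z)}$, valid on all of $\bb C$: this comes from observing that $H(z):=U^{\mu_K}(z)-\log|z|$ is superharmonic on $\bb C\setminus\mathrm{supp}(\mu_K)$, is bounded below by $\log\ca(K)-\log\max_{w\in K}|w|$ on $K$ by Frostman, and hence satisfies the same bound everywhere by the minimum principle. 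Combining this with Frostman's lower bound $U^{\mu_K}\ge\log\ca(K)$ for the remaining $k$ factors gives exactly \eqref{lowera}. This argument uses nothing about connectivity of $K$ and delivers the exact constants directly.
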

\begin{proof} Clearly we can assume $\deg P\geq 1$ so we can take $d\geq 1$. First assume $a_d\neq 0$. Let $c_1, \ldots, c_d$ be the zeros of $P$. By  Vieta's formulas for $k=0,\ldots, d-1$, 
	\begin{equation}\label{vieta}
		|a_k|=\left|\sum a_d c_{j_1}\cdots c_{j_{d-k}}\right|
	\end{equation}
	where the sum is over all possible $d \choose k$ terms $c_{j_1}\cdots c_{j_{d-k}}$. 
	Let $U^{\mu_K}(z)= \int \log{|z-t|}\, d\mu_K(t)$. Then 
	\begin{equation}\label{exp eq 2}
		M(P)= |a_d|\, \exp{\left[\sum_{j=1}^d U^{\mu_K}(c_j)\right]}
	\end{equation}
	and
	\begin{equation}\label{fros low}
		U^{\mu_K}(c_j) \geq \log{\ca(K)}
	\end{equation}
	by Frostman's theorem (see e.g. \cite[Theorem 3.3.4]{Ran95}). Thus for the $k=d$ case, \eqref{lowera} holds in view of \eqref{exp eq 2} and \eqref{fros low}.
	
	Our next goal is to find an upper bound for each $|a_d c_{j_1}\cdots c_{j_{d-k}}|$ in terms of $M(P)$ for the case $k<d$. If at least one $c_{j_l}$ is 0 then trivially
	\begin{equation}
		0=	|a_d c_{j_1}\cdots c_{j_{d-k}}|\leq \frac{ M(P) (\max_{z\in K} |z|)^{d-k}}{\ca(K)^d}.
	\end{equation}
	Assume that each $c_{j_l}$ is non-zero. Then 
	\begin{equation}\label{exp eq 1}
		|a_d c_{j_1}\cdots c_{j_{d-k}}|= |a_d| \exp \left[ {\sum_{l=1}^{d-k} \log{|c_{j_l}|} }\right].
	\end{equation}
	Let $$H(z):= U^{\mu_K}(z)-\log{|z|}.$$  
	It follows from Frostman's theorem that 
	\begin{equation}\label{another h}
		H(z)\geq \log{\ca(K)}- \log{\max_{w\in K}|w }|=: c_K
	\end{equation}
	on $K$. Since $U^{\mu_K}$ is harmonic on $\bb{C} \setminus \mathrm{supp}(\mu_K)$ and $-\log|z|$ is superharmonic on $\bb{C}$, $H$ is superharmonic on $\bb C \setminus \mathrm{supp}(\mu_K)$. From the generalized minimum principle (\cite[Theorem I.2.4]{ST97}), this implies that
	\begin{equation}
		H(z)\geq  c_K 
	\end{equation}
	for $\bb C \setminus \mathrm{supp}(\mu_K)$; combined with \eqref{another h}, this inequality holds for all $z\in \bb C$.
	This implies that 
	\begin{equation}
		\frac{\exp[U^{\mu_K}(z)]}{\exp[\log|z|]}\geq e^{c_K}
	\end{equation}
	and thus
	\begin{equation}\label{q1}
		|z|\leq e^{-c_K} e^{U^{\mu_K}(z)},\,\,\, z\in \bb{C}.
	\end{equation}
	Combining \eqref{q1} and \eqref{exp eq 1}, we get 
	\begin{equation}\label{q5}
		|a_d c_{j_1}\cdots c_{j_{d-k}}|\leq |a_d| e^{-(d-k)c_K}  e^{\sum_{l=1}^{d-k} U^{\mu_K}(c_{j_l})}.
	\end{equation}
	Using \eqref{exp eq 2} and Frostman's theorem, we can write
	\begin{align}
		|a_d|  e^{\sum_{l=1}^{d-k} U^{\mu_K}(c_{j_l})} &\leq \frac{|a_d|e^{\sum_{j=1}^{d} U^{\mu_K}(c_{j})}}{\ca(K)^k} \\
		&=\frac{M(P)}{\ca(K)^k}\label{q2}.
	\end{align}
	Hence \eqref{vieta}, \eqref{q5} and \eqref{q2} yield 
	
	\begin{align}
		|a_k|\ &\leq	\frac{{d\choose k} e^{-(d-k)c_K} M(P)}{\ca(K)^k}\\
		&= \frac{{d\choose k} M(P) (\max_{z\in K}|z|)^{d-k}}{\ca(K)^d}.\label{q3}
	\end{align}
	
	Finally, if $a_d= 0$, then $\deg{P}=d_1<d$. Since (see \cite[Corollary 5.2.2]{Ran95}) \\ 
	\begin{equation}
		\ca\left(\left\{z: |z|\leq \max_{w\in K} |w|\right\}\right)= \max_{z\in K} |z|, 
	\end{equation}
	and $K\subseteq \left\{z: |z|\leq \max_{w\in K} |w|\right\}$, we have  $\ca(K)\leq \max_{z\in K} |z|$. Hence we can rewrite \eqref{q3} as
	\begin{align}
		|a_k|&\leq \frac{{d_1\choose k} M(P) (\max_{z\in K}|z|)^{d_1-k}}{\ca(K)^{d_1}}\\
		&\leq\frac{{d\choose k} M(P) (\max_{z\in K}|z|)^{d-k}}{\ca(K)^d}
	\end{align}
	which completes the proof.
\end{proof}

{ We now state and prove a multivariate result.  

\begin{theorem} Let $K=K_1\times \cdots \times K_n$ where each $K_j$, $j=1,\ldots, n$, is a compact non-polar subset of $\bb{C}$ and 
\begin{equation}\label{form}
	P(z_1,\ldots, z_n) = \sum_{k_1=0}^{m_1} \ldots \sum_{k_n=0}^{m_n} a_{k_1\ldots k_n} z_1^{k_1} \cdots z_n^{k_n}
\end{equation}
where the degree of $P$ in $z_j$ is $m_j$ in the sense that there is at least one non-zero term including $z_j^{m_j}$ where $m_j=0$ is also allowed. 
Then
\begin{equation}\label{K4}
	|a_{k_1\ldots k_n}| \leq M(P) \prod_{N=1}^n \frac{{m_N \choose k_N} (\max_{z_N\in K_N} |z_N|)^{m_N-k_N}} {\ca(K_N)^{m_N}}.
\end{equation}
\end{theorem}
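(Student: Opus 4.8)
The plan is induction on the number of variables $n$, peeling off the last coordinate and combining the univariate theorem just proved (inequality \eqref{lowera}) with the product structure $\nu_K=\mu_{K_1}\otimes\cdots\otimes\mu_{K_n}$ from \cite[Theorem 1]{bloc} via Fubini--Tonelli. The base case $n=1$ is exactly \eqref{lowera}. For the inductive step write $z'=(z_1,\dots,z_{n-1})$, $K'=K_1\times\cdots\times K_{n-1}$, $\nu_{K'}=\mu_{K_1}\otimes\cdots\otimes\mu_{K_{n-1}}$, and expand $P$ in powers of the last variable,
\[
P(z_1,\dots,z_n)=\sum_{k_n=0}^{m_n} b_{k_n}(z')\,z_n^{k_n},\qquad
b_{k_n}(z')=\sum_{k_1,\dots,k_{n-1}} a_{k_1\dots k_n}\,z_1^{k_1}\cdots z_{n-1}^{k_{n-1}}.
\]

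First I would apply \eqref{lowera} to the one-variable polynomial $z_n\mapsto P(z',z_n)$ on $K_n$ (of degree $\le m_n$) for each fixed $z'$. This is legitimate for $\nu_{K'}$-a.e.\ $z'$: since $P\not\equiv 0$ some $b_{k_n}\not\equiv 0$, so the exceptional set $\{z':P(z',\cdot)\equiv 0\}\subset\{b_{k_n}=0\}$ is pluripolar, hence $\nu_{K'}$-null. Off that set \eqref{lowera} gives, for each $k_n$,
\[
|b_{k_n}(z')|\;\le\;\frac{\binom{m_n}{k_n}\,(\max_{z_n\in K_n}|z_n|)^{m_n-k_n}}{\ca(K_n)^{m_n}}\;\exp\!\Big[\int\log|P(z',z_n)|\,d\mu_{K_n}(z_n)\Big].
\]
Taking logarithms and integrating in $z'$ against $\nu_{K'}$, the left side becomes $\log M(b_{k_n})$ (the Mahler measure of $b_{k_n}$ relative to $K'$) and the iterated integral on the right becomes $\int\log|P|\,d\nu_K=\log M(P)$; here I would invoke the fact recorded after Proposition \ref{cln} that $\log|P|\in L^1(\nu_K)$, which both makes the interchange licit and guarantees that $\int\log|P(z',z_n)|\,d\mu_{K_n}(z_n)$ is finite for $\nu_{K'}$-a.e.\ $z'$. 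The outcome is
\[
M(b_{k_n})\;\le\;\frac{\binom{m_n}{k_n}\,(\max_{z_n\in K_n}|z_n|)^{m_n-k_n}}{\ca(K_n)^{m_n}}\;M(P).
\]

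Finally I would feed each $b_{k_n}$ into the inductive hypothesis as a polynomial in $z'$ on $K'$. If $b_{k_n}\equiv 0$ the coefficients $a_{k_1\dots k_n}$ all vanish; otherwise, with $\mu_j:=\deg_{z_j}b_{k_n}\le m_j$, the inductive bound on $|a_{k_1\dots k_{n-1}k_n}|$ reads $M(b_{k_n})\prod_{N=1}^{n-1}\binom{\mu_N}{k_N}(\max_{z_N\in K_N}|z_N|)^{\mu_N-k_N}/\ca(K_N)^{\mu_N}$, and since $\ca(K_N)\le\max_{z_N\in K_N}|z_N|$ the quantity $\mu\mapsto\binom{\mu}{k_N}(\max_{z_N}|z_N|)^{\mu-k_N}/\ca(K_N)^{\mu}$ is nondecreasing in $\mu$ (this is exactly the padding step at the end of the univariate proof), so each $\mu_N$ may be replaced by $m_N$. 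Combining this with the previous display yields \eqref{K4}. I expect the Fubini--Tonelli interchange to be the only genuinely delicate point: because $\log|P|$ changes sign one must use its $\nu_K$-integrability (via the Chern--Levine--Nirenberg estimate of Proposition \ref{cln}) together with the product decomposition $\nu_K=\nu_{K'}\otimes\mu_{K_n}$, and verify the inner integral is finite a.e.; the various degree drops ($P(z',\cdot)$ or some $b_{k_n}$ losing degree) cost nothing thanks to the capacity-versus-diameter monotonicity already built into \eqref{lowera}.
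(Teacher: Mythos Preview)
Your proposal is correct and follows essentially the same approach as the paper: both peel off one variable at a time, applying the univariate bound \eqref{lowera} pointwise and then integrating against the remaining equilibrium measures to pass from the pointwise inequality to one between Mahler measures. The only cosmetic differences are that the paper peels from $z_1$ forward via an explicit recursion $M(P_{k_1\ldots k_N})\le C_N\,M(P_{k_1\ldots k_{N-1}})$ rather than framing it as induction on $n$ from $z_n$ backward, and the paper leaves the degree-drop and Fubini--Tonelli justifications implicit whereas you spell them out.
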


\begin{remark}
	Both \eqref{lowera} and \eqref{K4} coincide with Mahler's results in \cite{mahler} when $K=\partial \bb{D}$ and $K=(\partial \bb{D})^n$ respectively.
\end{remark}

\begin{proof} 

Following Mahler \cite{mahler}, we define $P_{k_1\ldots k_N}$ recursively as follows:
\begin{align}
	P(z_1,\ldots, z_n)&= \sum_{{k_1}=0}^{m_1} P_{k_1}(z_2,\ldots, z_n)z_1^{k_1} \label{P1}\\
	P_{k_1\ldots k_{N-1}}(z_N,\ldots, z_n)&= \sum_{k_N=0}^{m_N} P_{k_1\ldots k_{N}}(z_{N+1},\ldots, z_n) {z_N}^{k_N},\,\,\, N>1	 \label{P2}\\
	P_{k_1\ldots k_n}&= a_{k_1\ldots k_n} \label{P3}
\end{align}
We have 
\begin{equation}\label{M1}
	M(P) =\exp \left[\int \log{|P|} \, d\nu_K\right]= \exp \left[\int \log{|P|} \, d\mu_{K_1}\cdots d\mu_{K_n} \right],
\end{equation}
and we define
\begin{equation}\label{M2}
	M(P_{k_1\ldots k_{N-1}}) := \exp \left[\int \log{|P_{k_1\ldots k_{N-1}}|} \, d\mu_{K_N}\cdots d\mu_{K_n} \right],\,\,\, 1<N\leq n,
\end{equation}
and
\begin{equation}
	M(P_{k_1 \ldots k_n}):= |a_{k_1\ldots k_n}|.
\end{equation}
In view of \eqref{lowera} and \eqref{P1} we see that
\begin{equation}\label{K1}
	|P_{k_1}(z_2,\ldots, z_n)| \leq \frac{{m_1 \choose k_1} (\max_{z_1\in K_1} |z_1|)^{m_1-k_1}\exp{[\int \log|P(z_1,\ldots, z_n)|\, d\mu_{K_1}(z_1)]}}{\ca(K_1)^{m_1}}.
\end{equation}
By taking logarithms, integrating both sides of \eqref{K1} with respect to $d\mu_{K_2}\cdots d\mu_{K_n}$, and then exponentiating both sides, we get

\begin{equation}\label{K2}
	M(P_{k_1}) \leq \frac{{m_1 \choose k_1} (\max_{z_1\in K_1} |z_1|)^{m_1-k_1} M(P)} {\ca(K_1)^{m_1}}.
\end{equation}
Arguing similarly, for each $N$, we get

\begin{equation}\label{K3}
	M(P_{k_1 \ldots k_N}) \leq \frac{{m_N \choose k_N} (\max_{z_N\in K_N} |z_N|)^{m_N-k_N} M(P_{k_1 \ldots k_{N-1}})} {\ca(K_N)^{m_N}}.
\end{equation}
Combining \eqref{P3}, \eqref{K2} and \eqref{K3}, we obtain \eqref{K4}.

\end{proof}}

The case where all the coefficients of a polynomial $P$ are integers is important, especially in terms of applications in number theory. If $K\subset \mathbb{C}$ has $\ca(K)=1$ and $P$ is a non-zero polynomial with integer coefficients  
it follows from \eqref{exp eq 2} and \eqref{fros low} that $M(P)\geq 1$. We refer the reader to the survey of Symth \cite{Symth} for more information and applications in the univariate setting. We now generalize this result as follows: 

\begin{theorem}\label{fivethree}
	Let $P$ be a non-zero polynomial of the form \eqref{form} where all the coefficients are integers. Then
	\begin{equation}\label{m1}
		M(P) \geq \prod_{N=1}^n \min (1, \ca(K_N)^{m_N}).
	\end{equation}
	In particular, if $\ca(K_N) \geq 1$ for $N=1,\ldots, n$ then
	\begin{equation}\label{m2}
		M(P) \geq 1.
	\end{equation}
\end{theorem}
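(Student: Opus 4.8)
The plan is to peel the variables off one at a time, reducing each step to the one–variable Mahler--Frostman estimate already used in the proof of \eqref{lowera}. For a non-pluripolar compact $E\subset\bb C^{m}$ and a polynomial $R$ write $M_E(R):=\exp\bigl[\int\log|R|\,d\nu_E\bigr]$, so that $M(P)=M_K(P)$; recall that for a product $E=E_1\times\cdots\times E_m$ one has $\nu_E=\mu_{E_1}\otimes\cdots\otimes\mu_{E_m}$. The engine I would isolate is the following: if $n\geq 2$, $Q(z_1,\ldots,z_n)$ is a nonzero polynomial, $m:=\deg_{z_1}Q$, and $Q=\sum_{k=0}^{m}Q_k(z_2,\ldots,z_n)\,z_1^{k}$ with $Q_m\not\equiv 0$, then, setting $K':=K_2\times\cdots\times K_n$,
\[
M_K(Q)\;\geq\;\min\!\bigl(1,\ca(K_1)^{m}\bigr)\,M_{K'}(Q_m).
\]

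To prove this I would fix $(z_2,\ldots,z_n)$ and regard $q(z_1):=Q(z_1,z_2,\ldots,z_n)$ as a one–variable polynomial of degree at most $m$. When $Q_m(z_2,\ldots,z_n)\neq 0$ its degree is exactly $m$ with leading coefficient $Q_m(z_2,\ldots,z_n)$, so factoring $q$ over its zeros and invoking Frostman's inequality $U^{\mu_{K_1}}\geq\log\ca(K_1)$ on all of $\bb C$ — exactly as in \eqref{exp eq 2}--\eqref{fros low} — gives
\[
\exp\!\Bigl[\int\log|Q(z_1,z_2,\ldots,z_n)|\,d\mu_{K_1}(z_1)\Bigr]\;\geq\;|Q_m(z_2,\ldots,z_n)|\,\ca(K_1)^{m}\;\geq\;|Q_m(z_2,\ldots,z_n)|\,\min\!\bigl(1,\ca(K_1)^{m}\bigr),
\]
while when $Q_m(z_2,\ldots,z_n)=0$ the inequality is trivial since its right–hand side vanishes. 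Since $\log|Q|\in L^{1}(\nu_K)$ — finiteness of $\int\log|Q|\,d\nu_K$ was recorded just after Proposition \ref{cln}, and $\log|Q|$ is bounded above on the compact set $K$ — Fubini--Tonelli applies; taking logarithms in the last display and integrating against $d\mu_{K_2}\cdots d\mu_{K_n}$ turns the left–hand side into $\int\log|Q|\,d\nu_K=\log M_K(Q)$, and exponentiating yields the claim.

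With this in hand I would prove \eqref{m1} by induction on $n$. For $n=1$ the polynomial $P$ is a nonzero integer-coefficient polynomial of degree $m_1$, so \eqref{exp eq 2} and \eqref{fros low} give $M(P)=|a_{m_1}|\exp\bigl[\sum_j U^{\mu_{K_1}}(c_j)\bigr]\geq|a_{m_1}|\,\ca(K_1)^{m_1}\geq\ca(K_1)^{m_1}$; inspecting the cases $\ca(K_1)\geq1$ and $\ca(K_1)<1$ then gives $M(P)\geq\min(1,\ca(K_1)^{m_1})$. For $n\geq2$, apply the lemma to $Q=P$: the coefficient $P_{m_1}=P_{m_1}(z_2,\ldots,z_n)$ is a nonzero polynomial with integer coefficients, and $\deg_{z_j}P_{m_1}=:\ell_j\leq m_j$ for $j=2,\ldots,n$ because every monomial of $P_{m_1}$ is inherited from a monomial of $P$. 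Hence the induction hypothesis yields $M_{K'}(P_{m_1})\geq\prod_{j=2}^{n}\min(1,\ca(K_j)^{\ell_j})$, and combining with the lemma,
\[
M(P)\;\geq\;\min\!\bigl(1,\ca(K_1)^{m_1}\bigr)\prod_{j=2}^{n}\min\!\bigl(1,\ca(K_j)^{\ell_j}\bigr)\;\geq\;\prod_{j=1}^{n}\min\!\bigl(1,\ca(K_j)^{m_j}\bigr),
\]
where the final step uses $\ell_j\leq m_j$ and the elementary monotonicity $\min(1,t^{\ell})\geq\min(1,t^{m})$ for $t>0$, $\ell\leq m$. This proves \eqref{m1}, and \eqref{m2} is then immediate, since $\ca(K_N)\geq1$ for all $N$ forces each factor $\min(1,\ca(K_N)^{m_N})$ to equal $1$.

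I expect the only delicate point to be the slicing step in the lemma: one needs $z_1\mapsto\log|Q(z_1,z_2,\ldots,z_n)|$ to be $\mu_{K_1}$-integrable for $\nu_{K'}$-almost every $(z_2,\ldots,z_n)$ in order to integrate the pointwise estimate and recombine by Fubini--Tonelli, and this integrability is exactly what the Chern--Levine--Nirenberg bound of Section 3 supplies for the product set $K$. Everything else is bookkeeping, but note that the degree drop $\ell_j\leq m_j$ upon peeling the first variable is precisely why the minima appear in \eqref{m1}: a naive bound $\prod_N\ca(K_N)^{m_N}$ would be false as soon as some $\ca(K_j)>1$ and $\ell_j<m_j$.
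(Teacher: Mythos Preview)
Your argument is correct and is essentially the same as the paper's: both peel off one variable at a time via the univariate Frostman bound $\exp[\int\log|q|\,d\mu_{K_1}]\geq|\text{lead}(q)|\,\ca(K_1)^{\deg q}$, integrate over the remaining variables using Fubini (justified by the CLN estimate), and then absorb the possible degree drop $\ell_j\le m_j$ through the inequality $\ca(K_j)^{\ell_j}\geq\min(1,\ca(K_j)^{m_j})$. The only cosmetic differences are that the paper invokes the already-established inequality \eqref{K2} (with $k_1=m_1$) rather than re-deriving the Frostman step, and postpones all the $\min(1,\cdot)$ replacements to the final line instead of inserting them at each stage as you do.
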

\begin{proof}
	Let $k_1=m_1$.	It follows from \eqref{P1} and \eqref{K2} that
	\begin{equation}\label{s1}
		\ca(K_1)^{k_1} M(P_{k_1}) \leq M(P).
	\end{equation}
	For $N=2$, in \eqref{P2}, let $k_2$ be the largest integer for which $P_{k_1 k_2}$ is a non-zero polynomial. Then if we replace $m_2$ by $k_2$ in \eqref{K3}, it still holds. Hence we have
	\begin{equation}\label{s2}
		\ca(K_2)^{k_2} M(P_{k_1 k_2})  \leq M(P_{k_1}).
	\end{equation}
	For $N=3,\ldots,n$, define $k_N$ recursively as the largest integer for which $P_{k_1\ldots k_N}$ is a non-zero polynomial. Thus we have  
	\begin{equation}\label{s3}
		\ca(K_N)^{k_N}	M(P_{k_1\ldots k_{N}}) \leq M(P_{k_1\ldots k_{N-1}}).
	\end{equation}
	Since $a_{k_1\ldots k_n}$ is a non-zero integer by our assumptions, it follows from \eqref{P3} that
	\begin{equation}\label{s4}
		M(P_{k_1\ldots k_{N}})= |a_{k_1\ldots k_n}|\geq 1.
	\end{equation}
	Combining \eqref{s1}, \eqref{s2}, \eqref{s3}, \eqref{s4}, we obtain
	\begin{equation}\label{s5}
		M(P) \geq \prod_{N=1}^n \ca(K_N)^{k_N}.
	\end{equation}
	Since $0\leq k_N\leq m_N$ for each $N$, the inequality \eqref{s5} implies \eqref{m1}. The inequality \eqref{m2} follows from \eqref{m1}.
\end{proof}

\bigskip


\end{document}